\title{Low-Rank Univariate Sum of Squares Has No Spurious Local Minima}
\author{Beno\^it Legat \and Chenyang Yuan \and Pablo A. Parrilo}
\date{\today}
\begin{document}
\maketitle

\begin{abstract}
  We study the problem of decomposing a polynomial $p$ into a sum of $r$ squares
  by minimizing a quadratically penalized objective
  $f_p(\vecp{u}) = \norm{\sum_{i=1}^r u_i^2 - p}^2$. This objective is nonconvex
  and is equivalent to the rank-$r$ Burer--Monteiro factorization of a
  semidefinite program (SDP) encoding the sum of squares decomposition. We show
  that for all univariate polynomials $p$, if $r \ge 2$ then $f_p(\vecp{u})$ has
  no spurious second-order critical points, showing that all local optima are
  also global optima. This is in contrast to previous work showing that for
  general SDPs, in addition to genericity conditions, $r$ has to be roughly the
  square root of the number of constraints (the degree of $p$) for there to be
  no spurious second-order critical points. Our proof uses tools from
  computational algebraic geometry and can be interpreted as constructing a
  certificate using the first- and second-order necessary conditions. We also
  show that by choosing a norm based on sampling equally-spaced points on the
  circle, the gradient $\nabla f_p$ can be computed in nearly linear time using
  fast Fourier transforms. Experimentally we demonstrate that this method has
  very fast convergence using first-order optimization algorithms such as
  \mbox{L-BFGS}, with near-linear scaling to million-degree polynomials.
\end{abstract}

% REQUIRED
\begin{keywords}
nonconvex optimization, sum of squares, Burer--Monteiro method, first-order methods, trigonometric polynomials, global landscape, semidefinite programming
\end{keywords}

% REQUIRED
\begin{MSCcodes}
90C23, 90C26, 90C22
\end{MSCcodes}

\section{Introduction}
Burer--Monteiro factorization \cite{Burernonlinearprogrammingalgorithm2003} is a
methodology to solve large-scale semidefinite programs (SDPs) by replacing
positive semidefinite (PSD) variables $X \succeq 0$ with a factorization
$X = UU^\top$. This automatically enforces the PSD constraint and lets us find
low-rank solutions by choosing the rank of the new variable $U$. In addition,
this factorization results in a nonlinear optimization problem that can be
solved with first-order methods with fast per-iteration times especially when
$\rank(U)$ is small. However, the resulting problem is nonconvex so these
methods may get stuck in local optima. We show that this will not happen to the
SDP finding the sum of squares decomposition of univariate polynomials; in this
setting all local optima are also global.

In this work we study SDPs arising from sum of squares optimization
\cite{ParriloStructuredsemidefiniteprograms2000}. The ability to represent the
cone of sum of squares polynomials as a SDP enables many applications in
polynomial optimization, control, and relaxations of combinatorial problems
\cite{laurent2009sums, blekherman_semidefinite_2013}. To determine if a
polynomial $p(x) \in \Rx[2d]$ is a sum of squares, it suffices to find a
feasible solution to the following SDP:
\begin{align} \label{eq:sos-sdp}
  \arraycolsep=1.5pt
  \begin{array}{rcl}
    p(x) &=& b(x)^\top X b(x) \\
    X &\succeq& 0
  \end{array}
\end{align}
where $b(x)$ is a suitable polynomial basis of $\Rx[d]$. The constraint
$p(x) = b(x)^\top X b(x)$ defines an affine subspace of the space of symmetric
matrices, which can be expressed by matching the coefficients of $p(x)$ with the
corresponding coefficients of the polynomial $b(x)^\top X b(x)$.

Given the factorization $X = UU^\top$ where $\bar{u}_1, \ldots, \bar{u}_r$ are
the column vectors of $U$, we have the explicit sum of squares decomposition
\begin{align} \label{eq:sos-decomp}
  \textstyle
  p(x) = \sum_{i=1}^r u_i(x)^2,
\end{align}
where $u_i(x) = b(x)^\top \bar{u}_i$. In other words, \eqref{eq:sos-decomp} is a
formulation for the Burer--Monteiro factorization of \eqref{eq:sos-sdp}
independent of any particular basis. Instead of solving a SDP to find $u_i(x)$,
we apply a quadratic penalty to the equality constraint \eqref{eq:sos-decomp} to
arrive at the following nonconvex objective:
\begin{align} \label{eq:nonconvex-obj}
  f_p(\vecp{u}) = \norm{\textstyle \sum_{i=1}^r u_i(x)^2 -  p(x)}^2,
\end{align}
where $\vecp{u} = \bmat{u_1 & \cdots & u_r}$ is a vector of $r$ degree-$d$
polynomials and the norm is induced by any inner product on polynomials of
degree-$2d$. Then a degree-$2d$ polynomial $p(x)$ is a sum of $r$ squares if and
only if
\begin{align*}
  \min_{\vecp{u} \in \Rx[d]^r} f_p(\vecp{u}) = 0.
\end{align*}

We say that $\vecp{u}$ is a \emph{first-order critical point} (FOCP) of
$f_p(\vecp{u})$ when $\nabla f_p(\vecp{u}) = 0$, where the derivatives are taken
with respect to the variables in $\vecp{u}$. Since for every $p$ we can find
spurious FOCPs (for example taking $\vecp{u} = 0$), it is essential to consider
second-order necessary conditions. If $\vecp{u}$ also satisfies
$\nabla^2 f_p(\vecp{u}) \succeq 0$, then it is a \emph{second-order critical
  point} (SOCP) of $f_p(\vecp{u})$.

Every local minimum of a function is a SOCP, but the converse is not true when
the function is nonconvex\footnote{Consider, for example, $f(x) = x^3$ at
  $x=0$.}. If $\vecp{u}$ is not a SOCP, then we can produce a descent direction
using the gradient or Hessian. This leads to efficient first-order methods
\cite{ge2015escaping, jin2017escape} that converge to SOCPs. Thus if we can show
that at every SOCP $f_p(\vecp{u}) = 0$, these algorithms will always converge to
a global minimum. A recent line of work \cite{boumal2020deterministic,
  bhojanapalli2018smoothed} has shown that for general SDPs under smoothed
analysis or genericity conditions, when the rank of the factorization is above
the Barvinok--Pataki bound (roughly the square root of the number of
constraints), there are no spurious SOCPs.  Moreover, the smoothed analysis or
genericity conditions are necessary, as \cite{bhojanapalli2018smoothed}
constructed a SDP where only a full-rank factorization can guarantee no spurious
SOCPs.

In this paper we consider the setting of univariate polynomial optimization,
which is a class of problems with applications in signal processing, control
\cite{roh2006discrete, dumitrescu2007positive}, and computing equilibria
of polynomial games \cite{parrilo2006polynomial}. These optimization problems
involving nonnegative univariate polynomials can be transformed (i.e., by a
bisection on the objective value) into feasibility problems for finding the sum
of squares decomposition of a univariate polynomial. The main result of our
paper shows that without any additional assumptions, the rank-2
quadratic-penalized Burer--Monteiro factorization of the SDP describing the sum
of squares decomposition \eqref{eq:nonconvex-obj} of a univariate polynomial has
no spurious SOCPs.
\begin{theorem} \label{thm:main} For all nonnegative univariate polynomials
  $p(x)\in \Rx[2d]$ and any $r \ge 2$, if $\vecp{u} \in \Rx[d]^r$ satisfies
  $\nabla f_p(\vecp{u}) = 0$ and $\nabla^2 f_p(\vecp{u}) \succeq 0$, then
  $f_p(\vecp{u}) = 0$.
\end{theorem}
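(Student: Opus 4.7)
My plan is to transcribe the first- and second-order necessary conditions into linear and positivity constraints on the error polynomial $e := \sum_i u_i^2 - p$, and then use the fact that $\Rx$ is a principal ideal domain (Bezout identity, free syzygy modules, Fej\'er--Riesz representation of nonnegative univariate polynomials) to force $e = 0$.

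I would begin by differentiating $f_p$ twice. Writing $q = \sum_i u_i^2$, the FOCP $\nabla f_p(\vecp{u}) = 0$ is equivalent to $\langle e, u_j v\rangle = 0$ for every $j$ and every $v \in \Rx[d]$; equivalently, $e$ is orthogonal in the ambient inner product to the subspace $\sum_j u_j \Rx[d] \subseteq \Rx[2d]$. The SOCP $\nabla^2 f_p(\vecp{u}) \succeq 0$ reads $2\|\sum_i u_i v_i\|^2 + \langle e, \sum_i v_i^2\rangle \ge 0$ for all $\vecp{v} \in \Rx[d]^r$, and specializing to syzygies (directions with $\sum_i u_i v_i = 0$) reduces it to $\langle e, \sum_i v_i^2\rangle \ge 0$.

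Next, set $g = \gcd(u_1,\ldots,u_r)$, $k = \deg g$, $\tilde u_j = u_j/g$, and $s = \sum_j \tilde u_j^2$, so $q = g^2 s$. A dimension count using the Euclidean algorithm gives $\sum_j u_j \Rx[d] = g \cdot \Rx[2d-k]$, so FOCP pins $e$ to the $k$-dimensional subspace $W := (g \Rx[2d-k])^\perp$. If $k = 0$ we conclude $e = 0$ immediately, which recovers the generic case. If $k \ge 1$, then because $\Rx$ is a PID the syzygy module of $(u_1,\ldots,u_r)$ is free of rank $r-1$, and in particular the pairwise Koszul-like syzygies $v_i = \tilde u_j w$, $v_j = -\tilde u_i w$ (all other entries zero) with $w \in \Rx[k]$ lie in the admissible degree range. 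Summing the syzygy SOCP over index pairs yields the key moment inequality
\[
  \langle e, s w^2\rangle \ge 0 \quad \text{for all } w \in \Rx[k].
\]

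To close the argument I would bring in nonnegativity of $p$: at every real root $\alpha$ of $g$, $q(\alpha) = 0$ forces $e(\alpha) = -p(\alpha) \le 0$. Using the FOCP identity $\|e\|^2 = \langle e, q\rangle - \langle e, p\rangle = -\langle e, p\rangle$ and a Fej\'er--Riesz decomposition of $p$ compatible with the real and complex roots of $g$, the plan is to express $-\|e\|^2 = \langle e, p\rangle$ as a nonnegative combination of (i) moment pairings of $e$ with elements of $s \cdot \Sigma_{2k}$ and (ii) evaluations of $-e$ at real roots of $g$, concluding $\|e\|^2 \le 0$ and hence $e = 0$. I expect this last step to be the main obstacle, because the cone $s \cdot \Sigma_{2k}$ is strictly smaller than the full cone of nonnegative polynomials in $\Rx[2d]$; one must exploit the constraint $e \in W$ precisely---or supplement with non-syzygy directions of the SOCP---to ensure the restricted moment positivity still separates every nonzero $e \in W$. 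This is presumably where the computational algebraic geometry tools advertised in the abstract enter, likely in the form of an explicit Positivstellensatz-style certificate assembled from the FOCP equalities and SOCP inequalities.
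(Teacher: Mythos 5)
Your setup coincides with the paper's: the gradient condition confines $e=\sig(\vecp{u})-p$ to the orthogonal complement of $g\cdot\Rx[2d-k]$ with $g=\gcd(u_1,\dots,u_r)$, the Hessian on Koszul syzygies yields $\dotp{e,\,s w^2}\ge 0$ for $s=\sum_j\tilde u_j^2$ and $w\in\Rx[k]$, and the target inequality is $\dotp{e,p}\ge 0$, since $\norm{e}^2=-\dotp{e,p}$. But the step you defer as ``the main obstacle'' is the entire technical content of the theorem, and two ideas are missing. The first is \Cref{lem:hgroup}: when $g$ and $s$ are coprime, every SOS $p$ satisfies $p\equiv s's\pmod{g}$ for some SOS $s'\in\Sx[2k]$, which splits $p$ into a piece in $\imag(\Aup)$ (annihilated by the gradient condition) plus a piece in the cone certified nonnegative by the syzygy Hessian condition. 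The proof is not a Fej\'er--Riesz decomposition of $p$; it inverts $s$ modulo $g$ by B\'ezout, observes the inverse $a$ is positive on the real zeros of $g$ (because $s$ is), and shows any such $a$ is a \emph{square} mod $g$ via Hermite/Birkhoff interpolation of the Taylor jets of $\sqrt{a}$ at the roots of $g$ together with the Chinese Remainder Theorem (\Cref{prop:soscoprime}). Note also that your proposed ingredient ``evaluations of $-e$ at real roots of $g$'' is not available: for an arbitrary inner product the stationarity conditions only give you the pairings $\dotp{e,gw}=0$ and the Hessian inequalities, never point values of $e$, so the certificate must be built from those alone.

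The second, more serious gap is that your decomposition stops at $g=\gcd(u_1,\dots,u_r)$, yet $g$ can share (necessarily non-real) roots with $s$ --- e.g.\ $u_1=(x^2+1)x$, $u_2=x^2+1$, where $\gcd=x^2+1$ divides $s=x^2+1$. Then $s$ is not invertible mod $g$, the cone $s\cdot\Sx[2k]$ reduced mod $g$ no longer reaches every SOS $p$, and the argument above collapses; this is exactly why the paper further factors the gcd as $gh$ in \Cref{prop:decomp} and devotes \Cref{prop:second_order_property_factor} to the factor $h$. There the full Hessian inequality (not just its restriction to syzygies) is used: perturbing along $\vecp{v}=\eta\cdot(\text{syzygy})+\vecp{b}$ and letting $\eta\to\pm\infty$ forces the cross term to vanish, producing new linear identities $\dotp{\Aup(\vecp{b})\,x_1^{2}/r_i,\;e}=0$ that iteratively replace each complex quadratic factor $r_i$ of $h$ by a real factor, after which coprimality is restored and the first argument applies. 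Your proposal supplies no substitute for these extra identities, so as written it covers only the case $\gcd(g,s)=1$, not all of $\Rx[d]^r$.
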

In particular, the rank bound in our result matches the Pythagoras number for
univariate polynomials \cite[Example 2.13]{choi1995sums}\footnote{The Pythagoras
  number for $\Sx[2d]$ is the smallest $r$ such that all polynomials in
  $\Sx[2d]$ can be written as a sum of $r$ squares of polynomials in
  $\Rx[d]$.}. In comparison, applying rank bounds for general SDPs to this
setting require $r \gtrsim \sqrt{d}$ (see \Cref{sec:background} for more
details).

\Cref{thm:main} is proved in \Cref{sec:main-proof}, by deriving a series of
increasingly stronger sufficient conditions (\eqref{eq:first_order_prop},
\eqref{eq:second_order_prop} and \eqref{eq:prop_factored}) implying
$f_p(\vecp{u}) = 0$ for increasingly larger classes of $\vecp{u}= (u_1, u_2)$,
eventually proving the result for all $\vecp{u} \in \R[x]^2$. To illustrate
this, we first show that when $r=2$ and $u_1, u_2$ are coprime,
$\nabla f_p(\vecp{u}) = 0$ implies that $f_p(\vecp{u}) = 0$ (the precise
statement and proof of this case are in \Cref{sec:g=1_h=1}). Note that in this
simplified setting only the first-order gradient condition is needed. By
computing the gradient, $\nabla f_p(\vecp{u}) = 0$ is equivalent to
\begin{align*}
  \nabla f_p(\vecp{u})(\vecp{v}) = \dotp{u_1 v_1 + u_2v_2, u_1^2 + u_2^2 - p} = 0
\end{align*}
for all $\vecp{v} = (v_1, v_2) \in \R[x]^2$. Since $u_1, u_2$ are coprime,
B\'ezout's identity (\Cref{lem:coprime_kernel}) implies that we can find
$\vecp{v}' = (v_1', v_2')$ so that
\begin{align} \label{bezout-simple}
  u_1 v_1' + u_2 v_2' = u_1^2 + u_2^2 - p,
\end{align}
thus showing that $f_p(\vecp{u}) = \norm{u_1^2 + u_2^2 - p}^2 = 0$. However, we
cannot assume a priori that $u_1, u_2$ are coprime as we are only given $p$ as
the input. The main technical contribution of our proof is how to handle the
more involved case when $u_1, u_2$ share a common factor.

We can also interpret our proof as a certificate. When we choose $\vecp{v}'$
satisfying \eqref{bezout-simple}, we obtain the identity
\begin{align*}
  \nabla f_p(\vecp{u})(\vecp{v}') = f_p(\vecp{u}).
\end{align*}
This implies that $f_p(\vecp{u}) = 0$ when $\nabla f_p(\vecp{u}) = 0$. In
\Cref{sec:cert} we generalize this example to our full proof of \Cref{thm:main},
showing for all $\vecp{u}$ and $p$ how to find $\vecp{v}'$ and $Q \succeq 0$
satisfying the following identity:
\begin{align*}
  \nabla f_p(\vecp{u})(\vecp{v}') + \dotp{Q, \nabla^2 f_p(\vecp{u})}= -f_p(\vecp{u}).
\end{align*}
From this identity it is clear that if $\vecp{u}$ is a SOCP, then
$f_p(\vecp{u}) = 0$. This compact form of our proof allows us to easily extend
our result to other problems in \Cref{sec:extensions}.

Since \Cref{thm:main} holds for any inner product, we can choose one that
enables efficient computation of $\nabla f_p(\vecp{u})$. When $p$ is a
degree-$2d$ univariate polynomial, an equivalent way of ensuring the constraint
$p(x) = b(x)^\top X b(x)$ in \eqref{eq:sos-sdp} is to write $2d+1$ constraints
$p(\hat{x}_i) = b(\hat{x}_i)^\top X b(\hat{x}_i)$, where
$\hat{x}_0, \ldots, \hat{x}_d$ are distinct sample points. This formulation can
be cast into the following least-squares objective,
\begin{align} \label{eq:main-opt}
  \min_{U \in \R^{(2d+1) \times r}} f_p(U)
  = \frac{1}{2d+1} \sum_{i=1}^{2d+1} \paren{\norm{U^\top \bi}_2^2 - \pxi}^2.
\end{align}
This is equivalent to choosing an inner product in \eqref{eq:nonconvex-obj} that
evaluates the polynomial on $2d+1$ points. If we choose $2d+1$ points on the
complex unit circle, we can compute $\nabla f_p(U)$ in $O(d \log d)$ time using
the Fast Fourier Transform (FFT). In \Cref{sec:experiments}, we show that this
method exhibits linear convergence experimentally using unconstrained
optimization algorithms such as \mbox{L-BFGS}, and has near-linear scaling to
million-degree polynomials.

\subsection{Contributions}
In summary, our main contributions in this paper are:
\begin{enumerate}
\item Proving that the quadratic penalty form of Burer--Monteiro factorization
  for univariate polynomial sum of squares decomposition has no spurious SOCPs
  (\Cref{thm:main}). Our result holds where the rank of the factorization is at
  least 2, matching the Pythagoras number for univariate polynomials. This is in
  contrast to previous work requiring rank $r \gtrsim \sqrt{d}$ in addition to
  genericity conditions or smoothed analysis for general SDPs, or showing that
  no spurious local minima exist in statistical problems.
\item Developing a new framework for proving that there are no spurious SOCPs
  for a quadratic-penalized factorized SDP, by constructing a certificate
  \eqref{eq:cert-general} using the first- and second-order necessary
  conditions. This certificate representation helps us extend our results to
  projection onto the sum of squares cone (\Cref{cor:projection}), certifying
  nonnegativity on intervals (\Cref{cor:intervals}) and sum of squares
  optimization.
\item Showing that by choosing a special norm (based on the evaluation of the
  polynomial on points on the unit circle), the full gradient of the objective
  can be computed in near-linear time using FFTs. It enables us to efficiently
  scale first-order methods to instances with millions of variables
  (\Cref{table:timing}). This is possible because our result \eqref{thm:main} is
  independent of the penalty function.
\end{enumerate}

\section{Background and Related Work}
\label{sec:background}
Let $\Sbb_n$ be the space of $n \times n$ symmetric matrices.  Given
$A_i \in \Sbb_n$, we consider the standard-form semidefinite feasibility problem
with variable $X \in \Sbb_n$:
\begin{align} \label{prob:sdp} \tag{\textsc{SDP}}
  \arraycolsep=1.5pt
  \begin{array}{rcl}
    \dotp{A_i, X} &=& b_i \\
    X &\succeq& 0
  \end{array}
\end{align}

\paragraph{Nonconvex formulation}
  Burer and Monteiro \cite{Burernonlinearprogrammingalgorithm2003} introduced the
  nonconvex reformulation $X = UU^\top$ to enforce the semidefinite constraint,
  where $U \in \R^{n \times r}$:
  \begin{align}\label{prob:nsdp} \tag{\textsc{NSDP}\textsubscript{r}}
    \dotp{A_i, UU^\top} = b_i.
  \end{align}
  This motivates the following least-squares formulation:
  \begin{align} \label{prob:sdpls} \tag{\textsc{SDPLS}\textsubscript{r}}
    \begin{array}{ll}
      \displaystyle \min_{U \in \R^{n \times r}}\,& \sum_i \norm{\dotp{A_i, U U^\top} - b_i}^2.
    \end{array}
  \end{align}
  If in addition to the equality constraints in \eqref{prob:sdp} one wishes to
  minimize the objective $\dotp{C, X}$, the works
  \cite{BurerLocalMinimaConvergence2005, bhojanapalli2018smoothed,
    CifuentesPolynomialtimeguarantees2019} formulate an augmented Lagrangian
  problem, where the term $\lambda \dotp{C, UU^\top}$ is added to the objective
  of \eqref{prob:sdpls}.

\paragraph{General SDP rank bounds} Burer and Monteiro subsequently showed in
  \cite{BurerLocalMinimaConvergence2005} that when $r \ge n$, there are no
  spurious SOCPs to \eqref{prob:sdpls}. This result is in fact tight and
  \cite{bhojanapalli2018smoothed} constructed an explicit instance where if
  $r = n-1$, one can find a SOCP that is not a global minimum. Thus for general
  SDP feasibility, additional conditions on the objective or analysis must be
  imposed. Then the rank bound can be improved to the maximum rank of extreme
  points of the section of the PSD cone with $m$ affine constraints. This
  maximum rank is $O(\sqrt{m})$, also known as the Barvinok--Pataki bound
  (\cite{BarvinokRemarkRankPositive2001, PatakiRankExtremeMatrices1998}). In the
  same work, Burer and Monteiro \cite{BurerLocalMinimaConvergence2005} showed
  that when a linear objective is added to \eqref{prob:sdp} and \eqref{prob:nsdp},
  if $r \gtrsim \sqrt{m}$, any local minimum of \eqref{prob:nsdp} is also a local
  minimum of \eqref{prob:sdp} with an additional rank-$r$ constraint. Then they
  showed that such a local minimum is either an optimal extreme point, or
  contained within the relative interior of a face of the feasible set of
  \eqref{prob:sdp} which is constant with respect to the objective function.
  Subsequent work \cite{bhojanapalli2018smoothed} then showed that if $C$ is
  generic enough, all local minima of \eqref{prob:nsdp} are global minima of
  \eqref{prob:sdp} (see Cifuentes and Moitra
  \cite{CifuentesPolynomialtimeguarantees2019} for more references). In summary
  this line of work requires generic constraints, and in addition either
  smoothness of the constraint set (\cite{boumal2016nonconvex,
    boumal2020deterministic}) or smoothed analysis
  (\cite{bhojanapalli2018smoothed, CifuentesPolynomialtimeguarantees2019,
    cifuentes2021burer}). In addition, \cite{waldspurger2020rank} showed that
  when $r$ is smaller than $\sqrt{m}$, SOCPs are not generically optimal.

\paragraph{Structured SDPs} Problems such as matrix completion and matrix sensing can
  be expressed as instances of \eqref{prob:sdpls} There has been a lot of recent
  interest in studying the global landscape of matrix sensing problems
  (\cite{ge2016matrix,bhojanapalli2016global,ge2017no}). A recent line of work
  (\cite{ge2017no, ge2016matrix, bandeira2016low}) shows that for certain
  statistical problems aiming to recover a signal in the form of a low-rank
  matrix corrupted by noise (where the SDP has a rank-1 solution in the
  noiseless setting), there are no spurious SOCPs when the noise level is low
  enough. Similar results \cite{ge2017no,li2019non,sun2018geometric} can be
  obtained for matrix sensing, where a low-rank matrix is reconstructed from
  linear measurements called sensing operators. See
  \cite{ChiNonconvexOptimizationMeets2019} for a survey of these problems. In
  summary, for a wide range of statistical problems, local minima are also
  global minima. These results are either satisfied with high probability, or
  require that the sensing operators $A_i$ satisfy the Restricted Isometry
  Property (RIP).

\paragraph{Sampling basis} The sampling or interpolation basis for sum of squares
  optimization is studied in \cite{Lofbergcoefficientssamplesnew2004} and
  \cite{CifuentesSamplingalgebraicvarieties2017}. Informally, they showed that
  if the sampled points are ``generic'' enough, the problem expressed in the
  sampled basis is equivalent to the original problem. This idea is also used
  for univariate polynomial optimization in \cite{papp2017univariate,Kapelevich2022}.

\paragraph{Univariate polynomials} Univariate/trigonometric polynomial optimization
  and their applications are studied in \cite{wu1999fir, roh2006discrete,
    dumitrescu2007positive} and the references therein. The decomposition of a
  nonnegative trigonometric polynomial into a sum of squares is also known as
  its spectral factorization \cite[Theorem
  1.1]{dumitrescu2007positive}. Previous methods for spectral factorization
  either require finding all $n$ roots of the polynomial, solving linear systems
  of order $n$, or use an approximate $O(N \log N)$ FFT-based algorithm by
  sampling $N \gg n$ points \cite{wu1999fir,dumitrescu2007positive}. Design
  problems involving constraints on nonnegative trigonometric polynomials can
  be formulated as SDPs. Due to their special structure, \cite{roh2006discrete}
  used FFTs to speed up per-iteration complexity for interior point methods
  solving these SDPs to $O(n^3)$.
  The set of all $X$ satisfying \eqref{eq:sos-sdp} is known as the Gram
  spectrahedron of $p(x)$. As the bounds developed for general SDPs depend on
  the rank of extreme points of the Gram spectrahedra, one may wonder if this
  quantity can be tightly bounded (i.e., better than the Barvinok--Pataki bound)
  in the special case of univariate polynomials. A recent work by Scheiderer
  \cite{scheiderer2022extreme} showed that this is not possible. If $p(x)$ is a
  sufficiently general positive univariate polynomial of degree $d$, its Gram
  spectrahedron has extreme points of all ranks up to $O(\sqrt{d})$.

\subsection{Notation}
Let $\Rx[d]$ be the space of univariate polynomials of degree at most $d$. Let
$\vecpoly{u} \in \Rx[d]^r$ be a vector of $r$ polynomials where each $u_i(x)$ is
a polynomial in $\Rx[d]$. Let $\sig: \Rx[d]^r \rightarrow \Rx[2d]$ be the
quadratic map defined by $\vecpoly{v} \mapsto \sum_{i=1}^r v_i(x)^2$, and
$\Sx[2d] \defeq \cone(\sig(\Rx[d]^1)) \subseteq \Rx[2d]$ be the cone of sum of
squares univariate polynomials of degree-$2d$. A \emph{binary form} is a
homogeneous polynomial in two variables. Let $\Rxhom[d]$ be the space of binary
forms of degree-$d$ and $\Sxhom[2d] \subseteq \Rxhom[2d]$ be the space of sum of
squares binary forms of degree-$2d$. $\Rxhom[d]$ and $\Sxhom[2d]$ are isomorphic
to $\Rx[d]$ and $\Sx[2d]$ respectively.

\subsection{Univariate Polynomials}
Any monic univariate polynomial $p \in \Rx[d]$ can be uniquely factored as
$p(x) = \prod_{i=1}^d (x - \alpha_i)$, where $\alpha_i \in \C$ are the roots of
$p$. Given univariate polynomials $p$, $g$ and $q$, we define an equivalence
relation $p \equiv q \pmod{g}$ if there exists $w \in \Rx$ such that
$p = q + wg$. We say that $g$ is a \emph{divisor} of $p$ if
$p \equiv 0 \pmod{g}$. In addition, if $g$ is also a divisor of $q$, then we say
that $g$ is a \emph{common divisor} of $p$ and $q$. Let $\gcd(p,q)$ be the
\emph{greatest common divisor} of $p$ and $q$, which is the common divisor with
the highest degree. By the unique factorization of $p$ and $q$, $\gcd(p, q)$ is
unique up to multiplication by a scalar. We say that $p$ and $q$ are
\emph{coprime} if $\gcd(p,q) = 1$.

Any binary form $p \in \Rxhom[d]$ can be factored as
$p(x_1, x_2) = \prod_{i=1}^d (\alpha_i x_1 - \beta_i x_2)$, where
$(\alpha_i, \beta_i) \in \C^2$. This factorization is unique up to
multiplication of $(\alpha_i, \beta_i)$ by a scalar. The equivalence relation
$p \equiv q \pmod{g}$ and $\gcd$ on binary forms are defined analogously to
those on univariate polynomials.

\section{Preliminary Results}
Since the polynomials we are optimizing over have fixed degrees, to better keep
track of this degree we will work with homogeneous polynomials (forms). In
\Cref{sec:socp-conditions} we derive expressions for the first- and second-order
necessary conditions (\eqref{eq:gradA} and \eqref{eq:hessA}). Next in Sections
\ref{sec:binary-forms} and \ref{sec:hgroup-proof} we review some results on
binary forms and prove our main \Cref{lem:hgroup}. Particularly important is our
decomposition of $\vecp{u}$ in \Cref{prop:decomp}.

\subsection{First and Second Order Necessary Conditions}
\label{sec:socp-conditions}
First we derive expressions for the gradient and Hessian of $f_p(\vecp{u})$.  In
addition to the binary forms we consider in this paper, the derivations in this
section also hold for general multivariate forms. For any inner product on forms
$\dotp{\cdot, \cdot}: \Rxhom[2d] \times \Rxhom[2d] \rightarrow \R$ and its
associated norm $\norm{\cdot}: \Rxhom[2d] \rightarrow \R$, the objective
function is written as
\begin{align*}
  \textstyle
  f_p(\vecpoly{u}) = \norm{\sum_{j=1}^r u_j(x)^2 - p(x)}^2.
\end{align*}
To find the gradient and Hessian of the objective, we compute the first- and
second-order terms of $f_p(\vecpoly{u} + \epsilon \vecpoly{v})$ to obtain:
\begin{align}
  \label{eq:grad}
  \frac{1}{4}\nabla_{\vecp{u}} f_p(\vecpoly{u})(\vecpoly{v})
  &= \dotp{{\textstyle \sum_{j=1}^r u_j(x)v_j(x)}, \textstyle \sum_{j=1}^r u_j(x)^2 - p(x)}, \\
  \label{eq:hess}
  \frac{1}{4}\nabla_{\vecp{u}}^2 f_p(\vecpoly{u})(\vecpoly{v}, \vecpoly{v})
  &= \dotp{\textstyle \sum_{j=1}^r v_j(x)^2, \textstyle \sum_{j=1}^r u_j(x)^2 - p(x)}
    + 2\norm{\textstyle \sum_{j=1}^r u_j(x)v_j(x)}^2.
\end{align}
Given a vector of polynomials $\vecpoly{u} \in \Rxhom[d_1]^r$, we define the
linear map $\Au: \Rxhom[d_2]^r \to \Rxhom[d_1+d_2]$ as
\begin{align*}
  \Au: \vecpoly{v} \mapsto \sum_{i=1}^r u_i(x) v_i(x).
\end{align*}
For example, we can write $\sigma(\vecp{u}) = \Aup(\vecp{u})$. When $n=2$,
$\Aup$ is up to a constant the map induced by the \emph{Sylvester} matrix
\cite{sturmfels2002solving}. If $d_1 = d_2$, the determinant of $\Aup$
is up to a constant the \emph{resultant} of $u_1$ and $u_2$; it vanishes if and
only if $u_1$ and $u_2$ share a common divisor. Next we concisely define SOCPs
using this new notation.
\begin{definition}\label{def:socp}
  We say that $\vecp{u} \in \Rxhom[d]^r$ is a second-order critical point (SOCP)
  of $f_p(\vecp{u})$ if its gradient is zero and its Hessian is positive
  semidefinite. In other words, for all $\vecp{v} \in \Rxhom[d]^r$,
  \begin{align}
    \label{eq:gradA}
    \frac{1}{4}\nabla_{\vecp{u}} f_p(\vecp{u})(\vecp{v})
    &= \dotp{\Aup(\vecp{v}), \sig(\vecp{u}) - p} = 0, \\
    \label{eq:hessA}
    \frac{1}{4}\nabla_{\vecp{u}}^2 f_p(\vecp{u})(\vecp{v}, \vecp{v})
    &= \dotp{\sig(\vecp{v}), \sig(\vecp{u}) - p} + 2\norm{\Aup(\vecp{v})}^2 \ge 0.
  \end{align}
\end{definition}

\begin{remark} \label{remark:socp-necessary} The first-order condition
  \eqref{eq:gradA} alone is insufficient to guarantee global optimality, even
  when $p$ is generic. For example, $\vecp{u} = 0$ is always a FOCP, but is
  spurious if $p \ne 0$. Since we can always construct spurious FOCPs, even when
  $p$ is generic, we need to consider the second-order conditions.
\end{remark}

\subsection{Pairs of Binary Forms}
\label{sec:binary-forms}
In this section we present some results on pairs of binary forms that will be
helpful for characterizing the sets\footnote{For the more algebrically inclined
  reader, the sets $\imag(\Aup)$ and $\ker(\Aup)$ are the graded parts of
  the \emph{ideal}~\cite[Definition~1.4.1]{cox2015groebner} and \emph{syzygy
    module}~\cite[Definition~10.4.3]{cox2015groebner} of $\vecp{u}$ respectively.}
$\imag(\Aup)$ and $\cone\paren{\sig\paren{\ker(\Aup)}}$ and proving
\Cref{thm:main} in \Cref{sec:main-proof}. From now on we assume that $r=2$ and
$\vecp{u} = (u_1, u_2)$.

The following lemma is a restatement of B\'ezout's lemma for univariate
polynomials using our notation. It states that any form in $\Rxhom[2d]$ is in
the image of the map $\Aup$, as long as $u_1$ and $u_2$ are coprime. We provide
a proof below for completeness.
\begin{lemma}
  \label{lem:coprime_kernel}
  Given $\vecp{u} = (u_1, u_2) \in \Rxhom[d_1]^2$ and $d_2 \ge d_1 - 1$,
  consider the map $\Aup : \Rxhom[d_2]^2 \to \Rxhom[d_1+d_2]$. Then $u_1$ and
  $u_2$ are coprime if and only if $\imag(\Aup) = \Rxhom[d_1+d_2]$.
  \begin{proof}
    For the ``if'' direction, when $u_1$ and $u_2$ are not coprime, then they
    share a common factor $w$ with degree at least 1, and so does every form in
    $\imag(\Aup)$. Thus $\imag(\Aup)$ is not equal to $\Rxhom[d_1+d_2]$.

    Next we prove the ``only if'' direction. Since
    $\Aup(\vecp{v}) = v_1 u_1 + v_2 u_2$, for every $\vecp{v} \in \ker(\Aup)$ we
    have
    \begin{align} \label{eq:coprime_equiv}
      u_1v_1 = -u_2v_2.
    \end{align}
    If $d_2 = d_1 - 1$, since $u_1$ and $u_2$ are coprime, by evaluating
    \eqref{eq:coprime_equiv} on the roots of $u_1$ and $u_2$ we can conclude
    that $v_1 = v_2 = 0$ and $\dim(\ker(\Aup)) = 0$. Otherwise if $d_2 \ge d_1$,
    this implies that $u_1$ is a divisor of $v_2$ and $u_2$ is a divisor of
    $v_1$. So there exists $w_1, w_2 \in \Rxhom[d_2-d_1]$ such that
    $v_1 = w_1u_2, v_2 = w_2u_1$. Then \eqref{eq:coprime_equiv} becomes
    $(w_1 + w_2)u_1u_2 = 0$, implying that $w_2 = -w_1$.  Thus
    $\ker(\Aup) = \braces{(w u_2, -w u_1) \mid w \in \Rxhom[d_2-d_1]}$ and
    $\dim(\ker(\Aup)) = \dim(\Rxhom[d_2-d_1])$. In summary, $d_2 \ge d_1-1$
    implies that $\dim(\ker(\Aup)) = d_2 + 1 - d_1$. Therefore by the
    rank-nullity theorem,
    \begin{align*}
      \dim(\imag(\Aup))
      = 2\dim(\Rxhom[d_2]) - \dim(\ker(\Aup))
      = \dim(\Rxhom[d_1 + d_2]).
    \end{align*}
    Since $\imag(\Aup) \subseteq \Rxhom[d_1+d_2]$ and these two sets have the
    same dimensions, they must be equal.
  \end{proof}
\end{lemma}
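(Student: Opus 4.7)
The plan is to prove the two directions separately. The direction that $\imag(\Aup) = \Rxhom[d_1+d_2]$ implies coprimality is easy to establish by contrapositive: if $u_1$ and $u_2$ share a common divisor $w$ of degree at least one, then every element $u_1 v_1 + u_2 v_2 \in \imag(\Aup)$ is divisible by $w$, so $\imag(\Aup)$ is a proper subset of $\Rxhom[d_1+d_2]$ (it misses, for example, any form coprime to $w$).

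The more substantive direction is the converse: coprimality implies surjectivity. My strategy is a dimension count via the rank-nullity theorem, which reduces the question to computing $\dim(\ker(\Aup))$. A pair $(v_1, v_2)$ lies in the kernel precisely when $u_1 v_1 = -u_2 v_2$. Coprimality of $u_1, u_2$ together with unique factorization of binary forms then forces $u_2 \mid v_1$ and $u_1 \mid v_2$. In the boundary case $d_2 = d_1 - 1$, the degrees of the $v_i$ are strictly smaller than those of the $u_j$, so the divisibility forces $v_1 = v_2 = 0$ and the kernel is trivial. When $d_2 \ge d_1$, I would write $v_1 = w_1 u_2$ and $v_2 = w_2 u_1$ for forms $w_1, w_2 \in \Rxhom[d_2 - d_1]$, substitute back to obtain $(w_1 + w_2) u_1 u_2 = 0$, and conclude $w_2 = -w_1$. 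This identifies $\ker(\Aup)$ with $\Rxhom[d_2 - d_1]$ via $w \mapsto (w u_2, -w u_1)$, giving $\dim(\ker(\Aup)) = d_2 - d_1 + 1$.

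Applying rank-nullity then yields $\dim(\imag(\Aup)) = 2(d_2 + 1) - (d_2 - d_1 + 1) = d_1 + d_2 + 1 = \dim(\Rxhom[d_1+d_2])$, and the inclusion $\imag(\Aup) \subseteq \Rxhom[d_1+d_2]$ upgrades to equality. The main obstacle, though modest, is the careful degree bookkeeping, particularly handling the boundary $d_2 = d_1 - 1$ where no quotient of nonnegative degree is available from the divisibility relation. An alternative approach would be to invoke the classical B\'ezout identity in $\R[x]$ to produce $a, b$ with $a u_1 + b u_2 = 1$ and then write any target $p$ as $p = (pa) u_1 + (pb) u_2$; but this does not immediately respect the degree constraint on $\vecp{v}$, so recovering the correct degree bound would still require essentially the dimension count above (or an auxiliary Euclidean reduction argument).
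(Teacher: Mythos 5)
Your proposal is correct and follows essentially the same route as the paper's proof: the contrapositive for the ``if'' direction, and for the converse a computation of $\dim(\ker(\Aup))$ via the divisibility argument (with the same case split at $d_2 = d_1 - 1$) followed by rank-nullity. No gaps.
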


In particular, \Cref{lem:coprime_kernel} motivates the decomposition
$u_1 = u_1' \gc$ and $u_2 = u_2' \gc$, where $u_1'$ and $u_2'$ are coprime and
$\gc = \gcd(u_1, u_2)$. Indeed when $\gc = 1$ we can apply the sufficient
condition \eqref{eq:first_order_prop} in \Cref{sec:g=1_h=1} to show that
$\vecp{u}$ is not a SOCP. When $\gcd(\gc, \sig(\vecp{u'})) = 1$ we can apply
\eqref{eq:second_order_prop} in \Cref{sec:h=1}. Otherwise we need to partition
the roots of $\gc = gh$ by whether each root is also a root of
$\sig(\vecp{u'})$, then apply \eqref{eq:prop_factored} in
\Cref{sec:general-proof}.

\begin{proposition} \label{prop:decomp} Given
  $\vecp{u} = (u_1, u_2) \in \Rxhom[d]^2$, we can always find $g \in \Rxhom[m]$,
  $h \in \Rxhom[k]$ and $\vecp{u'} = (u_1', u_2') \in \Rxhom[d-m-k]^2$ so that
  \begin{align*}
    (u_1, u_2) &= (u_1'gh, u_2'gh), \\
    \gcd(u_1, u_2) &= gh, \\
    \gcd(u_1', u_2') &= 1, \\
     \gcd(\sig(\vecp{u'}), g) = \gcd({u_1'}^2 + {u_2'}^2, g) &= 1, \\
    r \text{ is a (possibly complex) root of } h & \Rightarrow r \text{ is a root of } \sig(\vecp{u'}).
  \end{align*}
  Moreover, this decomposition is unique up to multiplication by constants.
  \begin{proof}
    Let $\gc = \gcd(u_1, u_2)$ so that $u_1' = u_1/\gc$ and $u_2' = u_2/\gc$. By
    partitioning the roots of $\gc$ we can decompose $\gc = gh$, where $g$ has no
    common roots with $\sig(\vecp{u'})$ and every root of $h$ is also a root of
    $\sig(\vecp{u'})$. The uniqueness of this decomposition follows from the
    unique factorization theorem for binary forms.
  \end{proof}
\end{proposition}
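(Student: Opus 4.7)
The plan is to construct the decomposition in two steps: first extract the greatest common divisor of $u_1$ and $u_2$, then split it further by examining which of its roots are shared with $\sig(\vecp{u'})$. Concretely, I would begin by letting $w = \gcd(u_1, u_2)$ (which is unique up to scalar multiplication by the unique factorization theorem for binary forms) and defining $u_1' = u_1/w$, $u_2' = u_2/w$. This immediately yields $u_1 = u_1' w$, $u_2 = u_2' w$, and $\gcd(u_1', u_2') = 1$, so once I decompose $w = gh$ appropriately, the first three listed properties will hold automatically.

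For the second step, I would partition the multiset of linear factors of $w$ over $\C$: let $g$ be the product of those linear factors whose root is \emph{not} a root of $\sig(\vecp{u'})$, and let $h$ be the product of the remaining linear factors (those whose root \emph{is} a root of $\sig(\vecp{u'})$), preserving multiplicities inherited from $w$. By construction, $gh = w$, so we recover $(u_1, u_2) = (u_1'gh, u_2'gh)$ and $\gcd(u_1, u_2) = gh$. Also by construction, $g$ and $\sig(\vecp{u'}) = {u_1'}^2 + {u_2'}^2$ share no common root in $\C$, hence $\gcd(g, \sig(\vecp{u'})) = 1$, and every root of $h$ is a root of $\sig(\vecp{u'})$.

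For uniqueness, I would observe that $w = \gcd(u_1, u_2)$ is already unique up to a nonzero scalar, and once $w$ is fixed, the partition of its root multiset into those shared and those not shared with $\sig(\vecp{u'})$ is completely determined. The unique factorization theorem then pins down $g$ and $h$ up to constants, and $u_1'$, $u_2'$ are determined up to the reciprocal constants, matching the claim.

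The only nontrivial point is verifying that $g$ and $h$ lie in $\Rxhom$ and not merely in $\C[x_1, x_2]$: this reduces to the observation that both $w$ and $\sig(\vecp{u'})$ are real forms, so their complex root sets (with multiplicities) are closed under conjugation, and a root $r$ and its conjugate $\bar r$ have identical membership status in the root set of $\sig(\vecp{u'})$. Hence conjugate pairs of roots of $w$ are never split by the partition, ensuring each of $g$ and $h$ is a product of real linear factors and conjugate pairs and therefore a real form. The rest of the argument is routine bookkeeping with divisibility in a unique factorization domain.
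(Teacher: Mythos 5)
Your proof is correct and follows essentially the same route as the paper's: extract $\gcd(u_1,u_2)$, then partition its roots according to whether they are shared with $\sig(\vecp{u'})$, with uniqueness following from unique factorization of binary forms. You additionally verify that $g$ and $h$ are real forms (conjugate pairs are never split by the partition), a detail the paper leaves implicit.
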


We demonstrate this decomposition with an example.
\begin{example} \label{ex:h_not_1} Let
  $u_1(x_1,x_2) = (x_1^2 + x_2^2)^2 (2x_1^2 + x_2^2) x_1^2$ and
  $u_2(x_1,x_2) = (x_1^2 + x_2^2)^2 (2x_1^2 + x_2^2) x_1 x_2$. Then the
  decomposition in \Cref{prop:decomp} gives $u_1' = x_1$, $u_2' = x_2$,
  $g = x_1 (2x_1^2 + x_2^2)$ and $h = (x_1^2 + x_2^2)^2$.
\end{example}

The following observation about the roots of $\sig(\vecp{u'})$ and $h$ is useful
in our proofs.
\begin{proposition} \label{prop:no-real-roots} In the decomposition of
  \Cref{prop:decomp}, both $\sig(\vecp{u'})$ and $h$ have no real roots and
  $\deg(h)$ is even.
  \begin{proof}
    If $\sig(\vecp{u'})$ has a real root $\hat{x}$, then
    $u_1'(\hat x)^2 + u_2'(\hat x)^2 = 0$. Thus $\hat{x}$ is a root of $u_1'$
    and $u_2'$, contradicting the fact that $\gcd(u_1', u_2') = 1$. Thus every
    root of $h$ is complex, and $h$ must have even degree.
  \end{proof}
\end{proposition}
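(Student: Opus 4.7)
The plan is to verify the three claims in order, exploiting only the defining properties from \Cref{prop:decomp} and the fact that $u_1', u_2'$ have real coefficients.

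First I would show that $\sig(\vecp{u'}) = {u_1'}^2 + {u_2'}^2$ has no real roots. Suppose, for contradiction, that $(\alpha, \beta) \in \R^2 \setminus \{0\}$ is such a root. Since $u_1'(\alpha,\beta)$ and $u_2'(\alpha,\beta)$ are real numbers whose squares sum to zero, both must vanish individually. Hence the real linear form $\beta x_1 - \alpha x_2$ divides both $u_1'$ and $u_2'$, contradicting $\gcd(u_1', u_2') = 1$.

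Next I would handle $h$: \Cref{prop:decomp} guarantees that every (possibly complex) root of $h$ is a root of $\sig(\vecp{u'})$, and step one already ruled out real roots of $\sig(\vecp{u'})$, so $h$ likewise has no real roots. Finally, since $h \in \Rxhom[k]$ has real coefficients, its roots in $\P^1(\C)$ come in complex conjugate pairs. Having no real roots means all roots come in genuinely non-real conjugate pairs, so $\deg(h) = k$ is even.

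There is no real obstacle here; the argument is a direct three-step chain following immediately from (i) the $\R$-rigidity of sums of two squares, (ii) the root-containment property in the definition of $h$, and (iii) conjugation symmetry of real-coefficient forms. The only point worth being careful about is consistently interpreting ``root'' projectively for binary forms (i.e.\ as a point in $\P^1(\C)$) so that the degree count and the coprimality argument are both valid in the homogeneous setting.
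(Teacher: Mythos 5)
Your proof is correct and follows essentially the same route as the paper's: a real root of $\sig(\vecp{u'})$ would force $u_1'$ and $u_2'$ to share a common real linear factor, contradicting coprimality, and the conjugate-pair symmetry of the non-real roots of $h$ gives the even degree. Your extra care about interpreting roots projectively for binary forms is a reasonable refinement but does not change the argument.
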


\subsection{Main Lemma}
\label{sec:hgroup-proof}
In our proof of \Cref{thm:main} we will use following main result, which may be
of independent interest.
\begin{lemma}
  \label{lem:hgroup}
  Given binary forms $g \in \Rxhom[m]$, $q \in \Sxhom[2d-2m]$ and
  $p \in \Sxhom[2d]$, if $g$ and $q$ are coprime then there exists a sum of
  squares binary form $s \in \Sxhom[2m]$ such that
  \begin{equation*}
    p \equiv s q \pmod{g}.
  \end{equation*}
\end{lemma}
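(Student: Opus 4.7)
The plan is to lift the problem to complex binary forms, where the SOS hypotheses on $p$ and $q$ become complex‑modulus identities, solve a plain linear congruence over $\C$, and then take modulus squared to return to the real SOS cone. Since the Pythagoras number of univariate polynomials (equivalently, of binary forms) is two, I may write $p = p_1^2 + p_2^2$ with $p_i \in \Rxhom[d]$ and $q = q_1^2 + q_2^2$ with $q_i \in \Rxhom[d-m]$. Setting $P := p_1 + ip_2$ and $Q := q_1 + iq_2$ yields complex binary forms of degrees $d$ and $d-m$ satisfying $p = P\bar P$ and $q = Q\bar Q$, where the bar denotes entrywise complex conjugation of coefficients. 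Any common divisor of $g$ and $Q$ in the ring of complex binary forms divides $Q\bar Q = q$ and hence divides $\gcd(g,q)=1$, so $g$ and $Q$ are coprime over $\C$.

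Next, consider the $\C$-linear map $\phi$ sending a pair $(S,T)$ of complex binary forms with $\deg S = m$ and $\deg T = d-m$ to $\phi(S,T) := SQ + Tg$, a complex binary form of degree $d$. If $(S,T) \in \ker\phi$, then $SQ = -Tg$, and the coprimality of $g$ and $Q$ in the complex binary form ring forces $g \mid S$; since $\deg S = m = \deg g$, this gives $S = cg$ for some $c \in \C$ and therefore $T = -cQ$. Thus $\dim_\C \ker\phi = 1$, and the dimension count $(m+1) + (d-m+1) - 1 = d+1$ matches the dimension of the target, so $\phi$ is surjective. In particular, there exist complex binary forms $S$ of degree $m$ and $T$ of degree $d-m$ with $SQ + Tg = P$.

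Finally, define $s := S\bar S$. Writing $S = s_1 + i s_2$ with $s_i \in \Rxhom[m]$ gives $s = s_1^2 + s_2^2 \in \Sxhom[2m]$, which is SOS of degree $2m$ by construction. Using $\bar g = g$,
\begin{equation*}
sq = (SQ)\overline{(SQ)} = (P - Tg)(\bar P - \bar T g) = p - g\bigl(P\bar T + \bar P T\bigr) + g^2 |T|^2,
\end{equation*}
so $sq - p = g\cdot w$ with $w := -(P\bar T + \bar P T) + g|T|^2$. Since $P\bar T + \bar P T = 2\Re(P\bar T)$ and $g|T|^2$ are real binary forms of degree $2d - m$, so is $w$, giving $p \equiv sq \pmod{g}$ as required.

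The conceptual heart of the argument is that the sum‑of‑two‑squares constraint on $s$ is exactly a complex‑modulus constraint, so taking $s := |S|^2$ enforces SOS automatically and reduces the problem to a plain linear congruence over $\C$. I do not anticipate obstacles beyond carefully setting up the complex B\'ezout step, which mirrors the proof of \Cref{lem:coprime_kernel} but now for forms of unequal degrees.
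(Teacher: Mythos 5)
Your proof is correct, and it takes a genuinely different route from the paper's. The paper first dehomogenizes (after a degree-preserving change of coordinates), uses B\'ezout to find $a$ with $aq \equiv 1 \pmod{g}$, and then proves a separate proposition --- via Hermite/Birkhoff interpolation of the Taylor jets of $\sqrt{a}$ at the roots of $g$, glued by the Chinese Remainder Theorem --- showing that $a$, being positive on the real zeros of $g$, is a single square mod $g$; multiplying through by $p$ and reducing termwise gives $s$. You instead exploit that the Pythagoras number of binary forms is $2$ (a fact the paper cites but does not prove), so that the SOS hypotheses become norm identities $p = P\bar P$, $q = Q\bar Q$ over $\C[x_1,x_2]$, and the whole problem collapses to one linear B\'ezout identity $SQ + Tg = P$; setting $s = S\bar S$ makes the SOS constraint automatic, and conjugating gives the congruence with the correct degree $w \in \Rxhom[2d-m]$. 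Your dimension count for surjectivity of $(S,T)\mapsto SQ+Tg$ is the unequal-degree analogue of \Cref{lem:coprime_kernel} and is sound, as is the observation that coprimality of $g$ and $Q$ over $\C$ follows from $\gcd(g,q)=1$ since any common factor would divide $Q\bar Q = q$. What each approach buys: yours is shorter, stays entirely projective (avoiding the degree-preservation subtlety the paper handles in a footnote), and yields the degree bounds immediately; the paper's is self-contained (it does not invoke the Pythagoras number) and its local, root-by-root construction of a square root mod $g$ is the kind of argument that has a chance of extending beyond the univariate case, whereas the complex-norm trick is special to settings where every SOS form is a sum of two squares.
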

The main ingredient of the proof of \Cref{lem:hgroup} is a result stating that
any univariate polynomial $a(x)$ strictly positive on the real zeros of $g(x)$
can be written as a single square\footnote{The number of squares is not
  important in our proof of \Cref{lem:hgroup}, we only need the property that
  $a(x)$ can be written as a sum of squares modulo $g(x)$.}
modulo $g(x)$.
\begin{proposition}
  \label{prop:soscoprime}
  Let $g(x)$ and $a(x)$ be coprime univariate polynomials where $\deg(g) =
  m$. If $a(x) > 0$ for all $\{x \in \R \mid g(x) = 0 \}$ then there exists a
  polynomial $t \in \R[x]_{m}$ such that
  \begin{align*}
    a \equiv t^2 \pmod{g}.
  \end{align*}
\end{proposition}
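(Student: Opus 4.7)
The plan is to use the Chinese Remainder Theorem together with Hensel's lemma applied to the polynomial $T^2 - a$. Factor $g$ over $\R$ as
\[
g(x) = \prod_i (x - r_i)^{e_i} \cdot \prod_j q_j(x)^{f_j},
\]
where the $r_i \in \R$ are distinct and the $q_j$ are distinct monic irreducible real quadratics. By CRT the quotient $\R[x]/(g)$ splits as a product of local Artinian $\R$-algebras, one per factor, and the coprimality hypothesis $\gcd(a,g) = 1$ makes $a$ a unit in each summand. Hence it suffices to produce a square root of $a$ in each local summand and recombine via CRT; reducing the resulting representative modulo $g$ then forces $\deg t < m$, so $t \in \R[x]_m$.

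For a real factor $\R[x]/((x-r_i)^{e_i})$, the residue of $a$ at $r_i$ is $a(r_i)$, which is strictly positive by hypothesis. I would take $\beta_i = \sqrt{a(r_i)} \in \R$ as the base case and Hensel-lift it through the chain $(x-r_i)^k$ for $k = 2, \ldots, e_i$; the derivative $2T$ of $T^2 - a$ is a unit at $\beta_i$, so Newton iteration goes through. Equivalently, one expands $a$ as a Taylor series around $r_i$ and extracts the square root as a formal power series truncated at order $e_i$. This is the only place the positivity hypothesis $a(r_i) > 0$ enters.

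For an irreducible-quadratic factor $\R[x]/(q_j^{f_j})$, the residue field $\R[x]/(q_j) \cong \C$ contains a square root of every nonzero element, so the reduction of $a$ admits a complex square root automatically. The same Hensel lifting (starting from any real polynomial representative of that square root, which exists by surjectivity of $\R[x]/(q_j^{f_j}) \twoheadrightarrow \R[x]/(q_j)$) produces $t_j \in \R[x]/(q_j^{f_j})$ with $t_j^2 \equiv a$; the Newton iteration lives entirely inside the real ring once the initial lift is fixed. Only coprimality, not positivity, is used here.

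I do not expect a serious obstruction: the real/complex split of the roots of $g$ matches the split of the hypothesis into its relevant case (real roots, where positivity is needed to get a real square root) and its vacuous case (non-real roots, where $\C$ supplies square roots for free). The only subtle point is to invoke Hensel's lemma in the appropriate form for Artinian local rings over $\R$, or equivalently to check that the Newton iteration $t_{k+1} = \tfrac12(t_k + a\, t_k^{-1})$ stays well-defined because each iterate remains a unit modulo the maximal ideal; this is automatic from $a$ being a unit.
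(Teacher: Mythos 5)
Your proposal is correct and is essentially the paper's own argument: the paper also glues local square roots of $a$ via the Chinese Remainder Theorem, taking at each root $r_i$ of $g$ the Taylor expansion of $\sqrt{a}$ truncated at the multiplicity $n_i$ — which is exactly your Hensel/Newton lift — and using positivity only at the real roots. The sole cosmetic difference is that you factor $g$ over $\R$ into linear and irreducible quadratic factors and work in real local Artinian algebras throughout, whereas the paper factors $g$ completely over $\C$ and recovers the real coefficients of $t$ from the conjugate symmetry of the local expansions.
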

This result is related to Schm{\"u}dgen's certificate \cite{schmudgen1991thek},
which states that if a polynomial is strictly positive on a compact
semialgebraic set, then it has a Positivstellensatz certificate in terms of the
equations describing the set. We prove \Cref{prop:soscoprime} using Hermite
interpolation on the series expansion of $\sqrt{a(x)}$ around the roots of $g$.
\begin{proof}[Proof of \Cref{prop:soscoprime}]
  Let $r_i$ be the roots (possibly complex) of $g(x)$,
  each with multiplicity $n_i$, so that $\sum_i n_i = m$. Consider the Taylor
  series expansion of $f(x) = \sqrt{a(x)}$ centered at $r_i$. Since $a$ and $g$
  do not share any common roots as they are coprime, this Taylor series is well
  defined around any root of $g$. Let the polynomials $\gamma_i(x)$ be the first
  $n_i$ terms of the Taylor expansion of $f(x)$ centered at $r_i$. The
  polynomials $\gamma_i$ have real coefficients if $r_i$ is real, and if $r_i$
  and $r_j$ are a pair of conjugate roots, $\gamma_i = \bar{\gamma}_j$. We can
  then use the Chinese Remainder Theorem \cite[Section 7.6]{dummit2003abstract}
  to construct the unique polynomial $t(x)$ with real coefficients and
  $\deg(t) < m$ such that
  \begin{align*}
    \forall i, \qquad t(x) \equiv \gamma_i(x) \pmod{(x - r_i)^{n_i}}.
  \end{align*}
  By construction, for all roots $r_i$ of $g$ and any $k = 0, \ldots, n_i-1$, we have
  \begin{align*}
    \frac{d^k}{dx^k} f(r_i) = \frac{d^k}{dx^k} \gamma_i(r_i) = \frac{d^k}{dx^k} t(r_i).
  \end{align*}
  For each root $r_i$, we have
  \begin{align*}
    \sqrt{a(r_i)} = f(r_i) = \gamma_i(r_i) = t(r_i).
  \end{align*}
  and
  \begin{align*}
    \frac{d}{dx} a(r_i) = 2 f(r_i) \frac{d}{dx} f(r_i) = 2 t(r_i) \frac{d}{dx} t(r_i)
    = \frac{d}{dx} t(r_i)^2.
  \end{align*}
  By induction we get $\frac{d^k}{dx^k}a(r_i) = \frac{d^k}{dx^k}t(r_i)^2$ for
  $k = 0, \ldots, n_i-1$. This is a generalization of Hermite interpolation
  for a variable number of consecutive derivatives at each point \cite[section
  17.6]{hamming1973numericalmethods}\footnote{This is referred to as Birkhoff
    interpolation in \cite{hamming1973numericalmethods}, and the existence of
    a unique interpolating polynomial crucially depends on the use of
    consecutive derivatives.}. Since $a(x)$ and $t(x)^2$ match at all the roots of $g$
  (including derivatives up to the multiplicity of the root), we have shown that $a \equiv t^2 \pmod{g}$.
\end{proof}

Then we prove the affine version of \Cref{lem:hgroup}.
\begin{lemma} \label{lem:group} Let $g, p$ and $q$ be univariate polynomials
  where $\deg(g) = m$, $\deg(p) = 2d$ and $\deg(q) = 2d-2m$, $p$ and $q$ are sum
  of squares, and $g$ and $q$ are coprime. Then there exists a sum of squares
  polynomial $s \in \Sx[2m]$ such that
  \begin{equation*}
    p \equiv s q \pmod{g}.
  \end{equation*}
\end{lemma}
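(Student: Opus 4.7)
The plan is to bypass the strict-positivity hypothesis of \Cref{prop:soscoprime} by exploiting a classical special feature of univariate nonnegative polynomials, namely that every such polynomial can be written as a sum of just two squares---obtained by factoring over $\C$, pairing complex-conjugate roots, and taking real and imaginary parts of the resulting Gaussian product. This lets me avoid having to take a modular square root of $pq^{-1}$ altogether.

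First, $pq$ is itself a sum of squares, being a product of two SOS polynomials, so the classical univariate sum-of-two-squares fact yields
\begin{equation*}
  pq = P^2 + Q^2
\end{equation*}
for some real polynomials $P,Q$ of degree at most $\tfrac{1}{2}\deg(pq) = 2d - m$. Next, since $g$ and $q$ are coprime, B\'ezout's identity supplies a polynomial $\bar q \in \R[x]$ of degree less than $m$ satisfying $q \bar q \equiv 1 \pmod g$. I would then set $P' := P \bar q \bmod g$ and $Q' := Q \bar q \bmod g$, both of degree less than $m$, and define $s := (P')^2 + (Q')^2$, which is manifestly a sum of two squares of degree at most $2m-2$, so $s \in \Sx[2m]$.

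The required congruence is a one-line verification, repeatedly using $q \bar q \equiv 1 \pmod g$:
\begin{equation*}
  sq \equiv (P\bar q)^2 q + (Q\bar q)^2 q \equiv (P^2+Q^2)\,\bar q\,(q\bar q) \equiv pq \cdot \bar q \equiv p(q\bar q) \equiv p \pmod g.
\end{equation*}

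The conceptual point is precisely where a naive appeal to \Cref{prop:soscoprime} would break: applying that proposition to a polynomial representative of $pq^{-1} \pmod g$ would require strict positivity on the real roots of $g$, which fails as soon as $p$ vanishes at such a root. Using instead the sum-of-two-squares representation of the product $pq$ together with the inverse of $q$ modulo $g$, we never need $p$ and $g$ to be coprime, and the degree bound $\deg s \le 2m$ comes for free from reducing $P\bar q$ and $Q\bar q$ modulo $g$.
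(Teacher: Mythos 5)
Your proof is correct, and it takes a genuinely different route from the paper's. The paper proves this lemma by first inverting $q$ modulo $g$ to get $a$ with $aq \equiv 1 \pmod g$, observing that $a$ is strictly positive on the real zeros of $g$, and then invoking \Cref{prop:soscoprime} (Hermite interpolation on the Taylor series of $\sqrt{a}$) to extract a modular square root $a \equiv t^2 \pmod g$; the certificate is then $s \equiv t^2 p \pmod g$, reduced square by square. You avoid the modular square root entirely: writing $pq = P^2 + Q^2$ (Pythagoras number $2$ for $\R[x]$) and taking $s \equiv (P\bar q)^2 + (Q\bar q)^2 = pq\bar q^2 \pmod g$, the extra factor of $q$ is absorbed because $q\bar q \equiv 1$ appears \emph{squared} in $sq \equiv p(q\bar q)^2$. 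Your congruence chain and the degree bookkeeping (reduce $P\bar q, Q\bar q$ modulo $g$ before squaring, giving $\deg s \le 2m-2$) both check out, and in fact any SOS decomposition of $pq$ would do, so the two-squares fact is a convenience rather than a necessity. What your argument buys is that \Cref{prop:soscoprime} becomes unnecessary for this lemma — a real simplification, since that proposition is the most delicate ingredient of the paper's proof and is not used elsewhere. What the paper's route buys is the standalone mini-Positivstellensatz of \Cref{prop:soscoprime}, which is of independent interest and whose strategy (taking square roots modulo an ideal) is the part one would try to generalize beyond the univariate setting. One small remark: your closing paragraph attributes to the paper a pitfall it does not actually fall into — the paper applies \Cref{prop:soscoprime} only to $a \equiv q^{-1}$, never to $pq^{-1}$, precisely so that strict positivity is available; but this does not affect the validity of your own argument.
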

\begin{proof}
	Since $q$ is coprime with $g$, \Cref{lem:coprime_kernel} (after reducing $q$ modulo
  $g$) guarantees that there exists a polynomial $a \in \Rx[m]$ such that
  \begin{equation}
    \label{eq:bezout}
    aq \equiv 1 \pmod{g}.
  \end{equation}
  We have $q(x) > 0$ for all real roots $x$ of $g$, since $q$ is nonnegative
  and coprime with $g$. Thus $a(x) > 0$ for all real roots $x$ of $g$, by
  evaluation of \eqref{eq:bezout} at these roots. Since $a(x)q(x) = 1$ for all
  roots $x$ of $g$, $a$ is also coprime with $g$. Then we can apply
  \Cref{prop:soscoprime} to find $t \in \Rx[m]$ so that $a \equiv t^2
  \pmod{g}$. Multiplying both sides of \eqref{eq:bezout} by $p$, we get
  \begin{align*}
    p \equiv t^2 p q \pmod{g}.
  \end{align*}
  Since $t^2p$ is a sum of squares, we can reduce each squared polynomial modulo
  $g$ to get $s \in \Sx[2m]$.
\end{proof}

Finally we prove \Cref{lem:hgroup}, which is the projective version of
\Cref{lem:group}.
\begin{proof}[Proof of \Cref{lem:hgroup}]
  We first apply a linear change of coordinates so that $(0, 1)$ is not a root
  of $g$, $p$, or $q$. Then let $g'(x) = g(x, 1)$, $p'(x) = p(x, 1)$ and
  $q'(x) = q(x, 1)$. Since this dehomogenization procedure preserves the degree
  of $g$, $p$, and $q$ \footnote{If the degree of $g$ is not preserved after
    dehomogenization, the degree of $t'$ after applying \Cref{lem:group} could
    be larger than $2d-m$. For example, if $d=m=2$, $g=x_1x_2$,
    $p = (2x_1^2+x_2^2)x_2^2$ and $q=1$, we get that $s' = (x^2+1)^2$ and
    $t'=-x^3$ after dehomogenizing and applying \Cref{lem:group}. This issue
    will not occur if the dehomogenization is degree-preserving.}, we can apply
  \Cref{lem:group} to find polynomials $s' \in \Sx[2m]$ and $t' \in \Rx[2d-m]$
  so that
  \begin{align*}
    p' = s' q' + t'g'.
  \end{align*}
  We can then homogenize by letting $s(x_1, x_2) = x_2^{2m} s'(x_1/x_2)$ and
  $t(x_1, x_2) = x_2^{2d-m} t'(x_1/x_2)$. Thus $s$ is also a sum of squares and
  \begin{align*}
    p = s q + tg.
  \end{align*}
\end{proof}

\section{Main Theorem and Proof}
\label{sec:main-proof}
In this section we prove \Cref{thm:main}, which states that for univariate
polynomials, a rank-2 decomposition has no spurious second-order critical
points. Using the decomposition in \Cref{prop:decomp}, we first prove simplified
versions of \Cref{thm:main} in Sections \ref{sec:g=1_h=1} and \ref{sec:h=1},
before proving the full version in \Cref{sec:general-proof}.

\subsection{Coprime Case: $g=1$, $h=1$}
\label{sec:g=1_h=1}
This is the case explained in the introduction.
In the decomposition of \Cref{prop:decomp}, $g = h = 1$ implies that $u_1$ and
$u_2$ are coprime. This happens generically and implies that for a fixed $p$,
the gradient condition \eqref{eq:gradA} is sufficient for almost all $\vecp{u}$.
\begin{proposition} \label{prop:first_order_property}
  Suppose $\vecp{u} \in \Rxhom[d]^2$ and $p \in \Sxhom[2d]$ satisfies
  $\nabla f_p(\vecp{u}) = 0$. If
  \begin{align} \tag{C1}\label{eq:first_order_prop}
    p \in \imag(\Aup),
  \end{align}
  then we have $f_p(\vecp{u}) = 0$.
\end{proposition}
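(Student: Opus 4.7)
The plan is to exploit the first-order condition \eqref{eq:gradA} by finding a direction $\vecp{v}'$ such that $\Aup(\vecp{v}') = \sig(\vecp{u}) - p$, which when plugged into \eqref{eq:gradA} will force the residual $\sig(\vecp{u}) - p$ to be orthogonal to itself.

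First, I would observe the crucial algebraic identity $\sig(\vecp{u}) = \Aup(\vecp{u})$, which was noted right after the definition of $\Aup$. This shows that $\sig(\vecp{u}) \in \imag(\Aup)$ automatically, regardless of any assumptions on $\vecp{u}$.

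Next, combining this with the hypothesis $p \in \imag(\Aup)$, and using the linearity of $\Aup$, the residual $\sig(\vecp{u}) - p$ also lies in $\imag(\Aup)$. So I can pick $\vecp{v}' \in \Rxhom[d]^2$ such that
\begin{equation*}
\Aup(\vecp{v}') = \sig(\vecp{u}) - p.
\end{equation*}
Substituting $\vecp{v}'$ into the first-order condition \eqref{eq:gradA} yields
\begin{equation*}
0 = \dotp{\Aup(\vecp{v}'), \sig(\vecp{u}) - p} = \dotp{\sig(\vecp{u}) - p, \sig(\vecp{u}) - p} = \norm{\sig(\vecp{u}) - p}^2,
\end{equation*}
which gives $f_p(\vecp{u}) = 0$ as desired.

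There is no substantive obstacle here, since once one spots that $\sig(\vecp{u}) \in \imag(\Aup)$, the proof reduces to a single substitution. The step worth emphasizing in the write-up is the certificate interpretation mentioned in the introduction: the chosen $\vecp{v}'$ serves precisely as the B\'ezout-style witness generalizing \eqref{bezout-simple}, yielding the identity $\nabla f_p(\vecp{u})(\vecp{v}') = f_p(\vecp{u})$ (up to a constant) and making the argument uniform with the more elaborate cases handled later in Sections~\ref{sec:h=1} and \ref{sec:general-proof}.
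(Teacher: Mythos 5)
Your proof is correct and is essentially identical to the paper's: both use $\sig(\vecp{u}) = \Aup(\vecp{u})$ together with the hypothesis $p \in \imag(\Aup)$ to produce $\vecp{v}'$ with $\Aup(\vecp{v}') = \sig(\vecp{u}) - p$, then substitute into \eqref{eq:gradA} to conclude $\norm{\sig(\vecp{u}) - p}^2 = 0$. You merely make explicit the step (linearity of $\Aup$ and $\sig(\vecp{u}) \in \imag(\Aup)$) that the paper leaves implicit.
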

\begin{proof}
  Since $p \in \imag(\Aup)$, we can find $\vecp{v} \in \Rxhom[d]^2$ so that
  $\Aup(\vecp{v}) = \sig(\vecp{u}) - p$. Evaluating the gradient condition
  \eqref{eq:gradA} at $\vecp{v}$, we conclude that
  $f_p(\vecp{u}) = \norm{\sig(\vecp{u}) - p}^2 = 0$.
\end{proof}
Since \Cref{lem:coprime_kernel} implies that $\imag(\Aup) = \Rxhom[2d]$ if and
only if $u_1$ and $u_2$ are coprime, we have shown that when $g = h = 1$, we
always have $p \in \imag(\Aup)$ and there are no spurious FOCPs and SOCPs.

\subsection{Special Case: $h=1$}
\label{sec:h=1}
When $u_1$ and $u_2$ are not coprime, $\sigma(\vecp{u}) - p $ might not be in
$\imag(\Aup)$ and we cannot use the argument in
\Cref{prop:first_order_property}. Thus we need to use make use of the Hessian
condition \eqref{eq:hessA}.

\begin{proposition} \label{prop:second_order_property} Suppose
  $\vecp{u} \in \Rxhom[d]^2$ and $p \in \Sxhom[2d]$ satisfies
  $\nabla f_p(\vecp{u}) = 0$ and $\nabla^2 f_p(\vecp{u}) \succeq 0$. If
  \begin{align} \tag{C2}\label{eq:second_order_prop}
    p \in \imag(\Aup) + \cone\paren{\sig\paren{\ker(\Aup)}},
  \end{align}
  then we have $f_p(\vecp{u}) = 0$.
\end{proposition}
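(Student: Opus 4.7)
My plan is to extend the argument of \Cref{prop:first_order_property} by using the Hessian condition to handle the contribution from $\cone(\sig(\ker(\Aup)))$, which is the ``new'' part of the image that wasn't available using only the gradient.

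First I would use the hypothesis \eqref{eq:second_order_prop} to write
\begin{equation*}
  p = \Aup(\vecp{v}_0) + \sum_{k} \lambda_k \sig(\vecp{w}_k),
\end{equation*}
where $\vecp{v}_0 \in \Rxhom[d]^2$, $\vecp{w}_k \in \ker(\Aup) \subseteq \Rxhom[d]^2$, and $\lambda_k \ge 0$. Then I would expand
\begin{equation*}
  \norm{\sig(\vecp{u}) - p}^2 = \dotp{\sig(\vecp{u}), \sig(\vecp{u}) - p} - \dotp{p, \sig(\vecp{u}) - p}.
\end{equation*}
The first term vanishes because $\sig(\vecp{u}) = \Aup(\vecp{u})$, so it equals $\tfrac{1}{4}\nabla f_p(\vecp{u})(\vecp{u}) = 0$ by \eqref{eq:gradA}. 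The second term is handled by substituting the decomposition of $p$ and evaluating each piece separately.

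For the $\Aup(\vecp{v}_0)$ piece, the first-order condition applied to $\vecp{v} = \vecp{v}_0$ immediately gives $\dotp{\Aup(\vecp{v}_0), \sig(\vecp{u}) - p} = 0$. For each $\vecp{w}_k \in \ker(\Aup)$, I would plug $\vecp{v} = \vecp{w}_k$ into the Hessian inequality \eqref{eq:hessA}; since $\Aup(\vecp{w}_k) = 0$, the term $2\norm{\Aup(\vecp{w}_k)}^2$ drops out, leaving exactly $\dotp{\sig(\vecp{w}_k), \sig(\vecp{u}) - p} \ge 0$. Combining these and using $\lambda_k \ge 0$, I obtain
\begin{equation*}
  \norm{\sig(\vecp{u}) - p}^2 = -\sum_k \lambda_k \dotp{\sig(\vecp{w}_k), \sig(\vecp{u}) - p} \le 0,
\end{equation*}
which forces $f_p(\vecp{u}) = \norm{\sig(\vecp{u}) - p}^2 = 0$.

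The only subtle point is the sign bookkeeping: the Hessian inequality naturally gives a lower bound in the ``wrong'' direction, which is precisely why one first rewrites $\norm{\sig(\vecp{u}) - p}^2$ as $-\dotp{p, \sig(\vecp{u}) - p}$ (using the gradient condition on $\vecp{u}$ itself) so that nonnegativity of the $\lambda_k$'s converts the Hessian bound into the desired upper bound of zero. There is no real computational obstacle here; the harder work is deferred to the next subsection, where one must actually verify that $p \in \imag(\Aup) + \cone(\sig(\ker(\Aup)))$ whenever $h = 1$, which is where \Cref{lem:hgroup} and \Cref{prop:decomp} enter.
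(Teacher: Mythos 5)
Your proof is correct and follows essentially the same route as the paper: decompose $p$ via \eqref{eq:second_order_prop}, kill the $\imag(\Aup)$ part and the $\dotp{\sig(\vecp{u}),\sig(\vecp{u})-p}$ term with the gradient condition, and use the Hessian condition on kernel elements (where the $\norm{\Aup(\cdot)}^2$ term vanishes) to get $\norm{\sig(\vecp{u})-p}^2 \le 0$. The only cosmetic difference is that you carry explicit cone coefficients $\lambda_k \ge 0$, whereas the paper absorbs them into the $\vecp{w}^{(i)}$ by rescaling.
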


This means that if for all $\vecp{u}$ we can decompose $p = q + r$ where
$q \in \imag(\Aup)$ and $r \in \cone\paren{\sig\paren{\ker(\Aup)}}$, then
$f_p(\vecp{u})$ has no spurious SOCPs. In particular, similar to how
$\imag(\Aup)$ is related to the gradient condition \eqref{eq:gradA} in
\Cref{prop:first_order_property}, $\cone\paren{\sig\paren{\ker(\Aup)}}$ is
related to the Hessian condition \eqref{eq:hessA}. The following result states
that \eqref{eq:second_order_prop} is satisfied if $h=1$.

\begin{lemma} \label{lem:imag_plus_cone} Given
  $\vecp{u} \in \Rxhom[d]^2$, if in the decomposition of
  \Cref{prop:decomp} we have $h=1$ then for every $p \in \Sxhom[2d]$,
  \begin{align*}
    p \in \imag(\Aup) + \cone\paren{\sig\paren{\ker(\Aup)}}.
  \end{align*}
\end{lemma}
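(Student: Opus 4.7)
The plan is to read off what $\imag(\Aup)$ and $\cone(\sig(\ker(\Aup)))$ look like when $h=1$, and then apply \Cref{lem:hgroup} to produce the required decomposition of $p$.

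First I would use the hypothesis $h=1$ together with \Cref{prop:decomp} to write $u_1 = u_1' g$, $u_2 = u_2' g$ with $\gcd(u_1',u_2')=1$ and $\gcd(g, \sig(\vecp{u'}))=1$. Let $m = \deg g$. Since $\Aup(\vecp{v}) = g \cdot \Aupp(\vecp{v})$, and $\Aupp\colon \Rxhom[d]^2 \to \Rxhom[2d-m]$ is surjective by \Cref{lem:coprime_kernel} (applied to the coprime pair $u_1',u_2'$ of degree $d-m$, with $d \ge d-m-1$), I get $\imag(\Aup) = g \cdot \Rxhom[2d-m]$, i.e., the forms in $\Rxhom[2d]$ divisible by $g$.

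Next I would describe the kernel. From $\Aup(\vecp{v}) = g(u_1' v_1 + u_2' v_2) = 0$ and $\gcd(u_1',u_2')=1$, one finds $v_1 = w u_2'$, $v_2 = -w u_1'$ for some $w \in \Rxhom[m]$, so
\begin{equation*}
  \sig(\ker(\Aup)) = \{\, w^2 \sig(\vecp{u'}) : w \in \Rxhom[m]\,\},
\end{equation*}
and therefore $\cone(\sig(\ker(\Aup))) = \sig(\vecp{u'}) \cdot \Sxhom[2m]$. So the claim $p \in \imag(\Aup) + \cone(\sig(\ker(\Aup)))$ is equivalent to asking for $s \in \Sxhom[2m]$ with
\begin{equation*}
  p \equiv s \cdot \sig(\vecp{u'}) \pmod{g}.
\end{equation*}

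Now I would invoke \Cref{lem:hgroup} with $q = \sig(\vecp{u'}) \in \Sxhom[2d-2m]$, which is coprime with $g$ by the decomposition, to produce exactly such an $s \in \Sxhom[2m]$. The residue $p - s\cdot \sig(\vecp{u'})$ is then of the form $tg$ for some $t \in \Rxhom[2d-m]$ by a degree count, hence lies in $\imag(\Aup)$, while $s\cdot\sig(\vecp{u'})$ lies in $\cone(\sig(\ker(\Aup)))$. The only nontrivial step is the application of \Cref{lem:hgroup}; everything else is a direct identification of the two sets. The main conceptual point (and the reason why we are bothering with the decomposition in \Cref{prop:decomp} at all) is precisely that the coprimality condition $\gcd(g,\sig(\vecp{u'}))=1$ needed by \Cref{lem:hgroup} is exactly what is guaranteed by the assumption $h=1$.
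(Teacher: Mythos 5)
Your proposal is correct and follows essentially the same route as the paper: identify $\imag(\Aup)$ with the multiples of $g$ via \Cref{lem:coprime_kernel}, identify $\cone(\sig(\ker(\Aup)))$ with $\sig(\vecp{u'})\cdot\Sxhom[2m]$ via the kernel description $(wu_2',-wu_1')$, and apply \Cref{lem:hgroup} using the coprimality of $g$ and $\sig(\vecp{u'})$ guaranteed by $h=1$. The only cosmetic difference is that you assert equality in the kernel characterization where the paper only records the inclusion it needs; both are fine.
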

\begin{proof}
  We want to show that any
  $p(x) \in \Sxhom[2d]$ can be written as the sum of polynomials in
  $\imag(\Aup)$ and $\sig(\ker(\Aup))$. Therefore it is useful to have a
  characterization of these sets. \Cref{lem:coprime_kernel} tells us
  that
  \begin{align*}
    \imag(\Aup) &= \braces{\Aupp(\vecp{v}) g \mid \vecp{v} \in \Rxhom[d] } \\
                &= \braces{w g \mid w \in \Rxhom[2d-m]}.
  \end{align*}
  Since for all $t \in \Rxhom[m]$ we have
  $(-t u_2', t u_1') \in \ker(\Aup)$,
  \begin{align}
    \notag
    \braces{t^2 \sig(\vecp{u'}) \mid t \in \Rxhom[m]} &\subseteq \sig(\ker(\Aup)) \\
    \label{eq:conesigker}
    \braces{s \sig(\vecp{u'}) \mid s \in \Sxhom[2m]} &\subseteq \cone(\sig(\ker(\Aup))).
  \end{align}
  Since $\sig(\vecp{u'})$ is coprime with $g$ by assuming $h=1$, we can apply
  \Cref{lem:hgroup} to show that there exists $w \in \Rxhom[2d-m]$ and
  $s \in \Sxhom[2m]$ such that $p = s \sig(\vecp{u'}) + wg$.
\end{proof}

Finally we prove \Cref{prop:second_order_property}.
\begin{proof}[Proof of \Cref{prop:second_order_property}]
  The condition \eqref{eq:second_order_prop} implies that there exist
  $\vecp{v} \in \Rxhom[d]^2, \vecp{w}^{(i)} \in \ker(\Aup)$ such that
  \begin{align*}
    p = \Aup(\vecp{v}) + \sum_i \sig(\vecp{w}^{(i)}).
  \end{align*}
  Since $\nabla f_p(\vecp{u}) = 0$,
  \eqref{eq:gradA} implies that
  \begin{equation}
    \label{eq:second_order_property_grad}
    \dotp{\Aup(\vecp{v}), \sig(\vecp{u}) - p} = 0.
  \end{equation}
  Since $\nabla^2 f_p(\vecp{u}) \succeq 0$ and $\vecp{w}^{(i)} \in \ker(\Aup)$,
  \eqref{eq:hessA} implies that
  \begin{equation}
    \label{eq:second_order_property_hess}
    \dotp{\sig(\vecp{w}^{(i)}), \sig(\vecp{u}) - p} \ge 0.
  \end{equation}
  Combining \eqref{eq:second_order_property_grad} and \eqref{eq:second_order_property_hess} gives
  \begin{align} \label{eq:pup}
    \dotp{p, \sig(\vecp{u}) - p} \ge 0.
  \end{align}
  Since $\nabla f_p(\vecp{u}) = 0$ implies that
  \begin{align*}
    \dotp{\Aup(\vecp{u}), \sig(\vecp{u}) - p} =
    \dotp{\sig(\vecp{u}), \sig(\vecp{u}) - p} = 0,
  \end{align*}
  we have
  \begin{align*}
    f_p(\vecp{u}) = \norm{\sig(\vecp{u}) - p}^2
    = \dotp{\sig(\vecp{u}) - p, \sig(\vecp{u}) - p}
    = - \dotp{p, \sig(\vecp{u}) - p}.
  \end{align*}
  This together with \eqref{eq:pup} implies that $f_p(\vecp{u}) \le
  0$. However $f_p(\vecp{u})$ is always nonnegative, thus it must be 0.
\end{proof}

\subsection{General Case}
\label{sec:general-proof}
\Cref{lem:imag_plus_cone} alone is insufficient to prove \Cref{thm:main}. It is
possible for $\gc = \gcd(u_1, u_2)$ to share complex roots with $\sig(\vecp{u'})$
(recall from \Cref{prop:no-real-roots} that all roots of $\sig(\vecp{u'})$ are
complex), as seen in \Cref{ex:h_not_1}. Hence the argument in the proof of
\Cref{lem:imag_plus_cone} fails as $\gc$ is not coprime with
$\sig(\vecp{u'})$. To get around this issue, we will derive the sufficient
condition \eqref{eq:prop_factored} in \Cref{prop:second_order_property_factor},
a stronger version of \eqref{eq:second_order_prop}, by carefully examining the
Hessian condition \eqref{eq:hessA}. Roughly speaking,
\Cref{prop:second_order_property_factor} shows that we can replace every root of
$h$ (which must be complex) with any real root\footnote{Without loss of
  generality we choose this real root to be $x_1$.}. Since $\sig(\vecp{u'})$ has
no real roots, $\gc x_1^k/h$ is now coprime with $\sig(\vecp{u'})$, and we can
then complete the proof by following the argument in the previous section.

\begin{proposition} \label{prop:second_order_property_factor} Suppose
  $\vecp{u} \in \Rxhom[d]^2$ and $p \in \Sxhom[2d]$ satisfies
  $\nabla f_p(\vecp{u}) = 0$ and $\nabla^2 f_p(\vecp{u}) \succeq 0$, with the
  decomposition in \Cref{prop:decomp} where $k = \deg(h)$ and
  $\vecp{u}x_1^k/h = (u_1'g x_1^k, u_2'g x_1^k)$. If
  \begin{align} \tag{C3}\label{eq:prop_factored}
    p \in \imag\paren{\mathcal{A}_{\vecp{u}x_1^k/h}}
    + \cone\paren{\Sigma\paren{\ker(\Aup)}},
  \end{align}
  then $f_p(\vecp{u}) = 0$.
  \begin{proof}
    We first prove that if $r \in \Rxhom[\pmb{\ell}]$ is a common divisor of
    $\sig(\vecp{u'})$ and $\gc = \gcd(u_1, u_2)$, then
    \begin{align} \label{eq:identity-factored}
      \dotp{\Aup(\vecp{b}) x_1^\ell/r, \sig(\vecp{u}) - p} = 0,\,
      \text{for all } \vecp{b} = (b_1, b_2) \in \Rxhom[d]^2.
    \end{align}
    Given any $b_1, b_2 \in \Rxhom[d]$ and $\eta \in \R$, let
    $v_1 = \eta x_1^\ell u_2/r + b_2$ and $v_2 = -\eta x_1^\ell u_1/r - b_1$. We have
    \begin{align}\label{eq:Au-identity-factored}
      \Aup(\vecp{v}) & = u_1 (\eta x_1^\ell u_2/r + b_2) - u_2 (\eta x_1^\ell u_1/r + b_1)
                       = u_1b_2 - u_2b_1\\
      \label{eq:sig-identity-factored}
      \sig(\vecp{v}) & = \eta^2 x_1^{2\ell} (u_1^2 + u_2^2)/r^2
                       + 2\eta x_1^\ell (b_1 u_1 + b_2 u_2)/r
                       + (b_1^2 + b_2^2).
    \end{align}
    Since $r$ is a divisor of both $\sig(\vecp{u'})$ and $\gc$,
    $x_1^{2\ell} (u_1^2 + u_2^2)/r^2 = x_1^{2\ell} \sig(\vecp{u'}) \gc^2/r^2$ is
    a multiple of $\gc$. Thus $x_1^{2\ell} (u_1^2 + u_2^2)/r^2 \in \imag(\Aup)$
    and we have
    \begin{align*}
      \dotp{\eta^2 x_1^{2\ell} (u_1^2 + u_2^2)/r^2, \sig(\vecp{u}) - p} = 0.
    \end{align*}
    Therefore, the Hessian condition \eqref{eq:hessA} implies that for all
    $\eta \in \R$,
    \begin{equation}
      \label{eq:hesseta}
      2\eta \dotp{(b_1 u_1 + b_2 u_2)x_1^\ell/r, \sig(\vecp{u}) - p}
      + \dotp{b_1^2 + b_2^2, \sig(\vecp{u}) - p}
      + 2\norm{u_1b_2 - u_2b_1}^2 \ge 0.
    \end{equation}
    This implies the identity \eqref{eq:identity-factored}; otherwise there
    exists $\eta$ such that \eqref{eq:hesseta} is negative.

    Since $\sig(\vecp{u'})$ and $h$ have no real roots
    (\Cref{prop:no-real-roots}), we can write $h = \prod_{i=1}^{k/2} r_i$, where
    each $r_i \in \Rxhom[2]$ is a quadratic form corresponding to the product of
    a pair of complex roots. We first apply the same argument from above to show
    that
    \begin{align} \label{eq:prop_r1}
      \dotp{\Aup(\vecp{b}) x_1^2/r_1, \sig(\vecp{u}) - p} = 0,\,
      \text{for all } \vecp{b} \in \Rxhom[d]^2.
    \end{align}
    Next we show that \eqref{eq:prop_r1} implies that
    \begin{align} \label{eq:prop_r1r2}
      \dotp{\Aup(\vecp{b}) x_1^4/(r_1r_2), \sig(\vecp{u}) - p} = 0,\,
      \text{for all } \vecp{b} \in \Rxhom[d]^2.
    \end{align}
    Similar to before, let $r = r_1r_2$ so we have the identities
    \eqref{eq:Au-identity-factored} and \eqref{eq:sig-identity-factored} as
    before. Since $r_2$ is a divisor of both $\sig(\vecp{u'})$ and $\gc/r_1$,
    $x_1^{8} \sig(\vecp{u'}) \gc^2/(r_1r_2)^2 = x_1^{8}
    \frac{\sig(\vecp{u'})}{r_2} \frac{\gc}{r_1r_2} \frac{\gc}{r_1} $ is a
    multiple of $\gc/r_1$. Thus
    $x_1^{8} \sig(\vecp{u'}) \gc^2/(r_1r_2)^2 \in
    \imag\paren{\mathcal{A}_{\vecp{u}x_1^2/r_1}}$ and we then use
    \eqref{eq:prop_r1} to show \eqref{eq:prop_r1r2}.

    Thus by iteratively applying the previous arguments \footnote{The argument here
      is subtle because although every root of $h$ is a root of
      $\sig(\vecp{u'})$, a root may have higher multiplicity in $h$ than in
      $\sig(\vecp{u'})$. For example, it is possible that
      $h = (x_1^2 + x_2^2)^2$ but $\sig(\vecp{u'}) = x_1^2 + x_2^2$. In this
      case, to obtain \eqref{eq:identity-factored} we need to iteratively ``peel
      off'' the factors $r_1 = r_2 = x_1^2 + x_2^2$, by first proving
      \eqref{eq:prop_r1} and then proving \eqref{eq:prop_r1r2}. }, we show that
    for every $1 \le k' \le k/2$ and $\vecp{b} \in \Rxhom[d]^2$,
    \begin{align*}
      \textstyle
      \dotp{\Aup(\vecp{b}) x_1^{2k'}/\prod_{i=1}^{k'}r_i, \sig(\vecp{u}) - p} = 0.
    \end{align*}
    This is because each $r_i$ is a divisor of $\sig(\vecp{u'})$ and
    $\prod_{i=1}^{k'}r_i$ divides $\gc$. From here we can finish our proof by
    following the same steps as in the proof of \Cref{prop:second_order_property}.
  \end{proof}
\end{proposition}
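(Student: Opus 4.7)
The plan is to mirror the proof of \Cref{prop:second_order_property}, but first to strengthen the first-order-type vanishing identity so that it applies to the enlarged set $\imag(\mathcal{A}_{\vecp{u}x_1^k/h})$ rather than to just $\imag(\Aup)$. Once that extended identity is available, hypothesis \eqref{eq:prop_factored} decomposes $p$ in a way that each summand pairs nonpositively with $\sig(\vecp{u})-p$, and a final application of the gradient condition squeezes $f_p(\vecp{u}) = \norm{\sig(\vecp{u})-p}^2$ down to zero.

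The key new ingredient is an $\eta$-parametrized family of test vectors. Given any common divisor $r \in \Rxhom[\ell]$ of $\sig(\vecp{u'})$ and $\gc = gh$, and any $\vecp{b} \in \Rxhom[d]^2$, I would evaluate the Hessian condition \eqref{eq:hessA} at
\[ \vecp{v}(\eta) = \paren{\eta x_1^{\ell} u_2/r + b_2,\; -\eta x_1^{\ell} u_1/r - b_1}. \]
A direct computation gives $\Aup(\vecp{v}(\eta)) = u_1 b_2 - u_2 b_1$, independent of $\eta$, and
\[ \sig(\vecp{v}(\eta)) = \eta^2 \, x_1^{2\ell}(u_1^2+u_2^2)/r^2 + 2\eta \, x_1^{\ell}(b_1 u_1 + b_2 u_2)/r + (b_1^2 + b_2^2). \]
Because $r \mid \gc$ and $r \mid \sig(\vecp{u'})$ while $u_1^2+u_2^2 = \gc^2 \sig(\vecp{u'})$, the $\eta^2$-coefficient is divisible by $\gc$, so it lies in $\imag(\Aup)$ by \Cref{lem:coprime_kernel} and pairs to zero with $\sig(\vecp{u})-p$ via the gradient condition \eqref{eq:gradA}. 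What is left of \eqref{eq:hessA} is then affine in $\eta$ with a nonnegative constant term; requiring this to be nonnegative for every real $\eta$ forces the coefficient of $\eta$ to vanish, yielding the identity
\[ \dotp{\Aup(\vecp{b})\, x_1^{\ell}/r,\; \sig(\vecp{u})-p} = 0. \]

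To reach $r = h$ I would iterate this $\eta$-trick along a factorization $h = r_1 r_2 \cdots r_{k/2}$ into real quadratic forms, which exists by \Cref{prop:no-real-roots}. At stage $k'$, apply the trick with $r = \prod_{i=1}^{k'} r_i$; the quadratic-in-$\eta$ term now lands inside $\imag(\mathcal{A}_{\vecp{u}x_1^{2(k'-1)}/\prod_{i<k'} r_i})$ rather than inside $\imag(\Aup)$, so the stage-$(k'-1)$ identity (instead of the raw gradient condition) is what kills it. This is the subtlety flagged in the excerpt's footnote: an irreducible factor may occur with strictly higher multiplicity in $h$ than in $\sig(\vecp{u'})$, yet the recursion still closes because at each step I only need $r_{k'} \mid \sig(\vecp{u'})$ to double its multiplicity in $\gc \sig(\vecp{u'})$, together with $\prod_{i \le k'} r_i \mid \gc$. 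After $k/2$ iterations I arrive at $\dotp{\mathcal{A}_{\vecp{u}x_1^k/h}(\vecp{b}),\, \sig(\vecp{u})-p} = 0$ for every $\vecp{b} \in \Rxhom[d]^2$.

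Decomposing $p = \mathcal{A}_{\vecp{u}x_1^k/h}(\vecp{v}) + \sum_i \sig(\vecp{w}^{(i)})$ per \eqref{eq:prop_factored} with $\vecp{w}^{(i)} \in \ker(\Aup)$, the strengthened identity handles the first summand while \eqref{eq:hessA} at each $\vecp{w}^{(i)}$ (whose $\Aup$-image vanishes) gives $\dotp{\sig(\vecp{w}^{(i)}), \sig(\vecp{u})-p} \ge 0$. Summing produces $\dotp{p, \sig(\vecp{u})-p} \ge 0$, and the gradient consequence $\dotp{\sig(\vecp{u}), \sig(\vecp{u})-p} = 0$ then gives $f_p(\vecp{u}) = -\dotp{p, \sig(\vecp{u})-p} \le 0$, which combined with $f_p \ge 0$ closes the proof. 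The main obstacle I foresee is the careful bookkeeping of $x_1$-exponents and irreducible-factor multiplicities in the iteration, since at each stage the $\eta^2$-coefficient must land exactly in the image produced at the previous stage; this is precisely why the statement introduces the tailored map $\mathcal{A}_{\vecp{u}x_1^k/h}$ rather than attempting to enlarge $\imag(\Aup)$ directly.
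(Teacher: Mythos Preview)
Your proposal is correct and follows essentially the same approach as the paper's proof: the identical $\eta$-parametrized test vectors $\vecp{v}(\eta)$, the same observation that the $\eta^2$-coefficient lies in the appropriate image so that the Hessian inequality becomes affine in $\eta$, the same iterative ``peeling off'' of the quadratic factors $r_i$ of $h$ (with the same recursion where the previous stage's identity is what kills the quadratic term), and the same concluding argument borrowed from \Cref{prop:second_order_property}. You have even anticipated the multiplicity subtlety that the paper flags in its footnote.
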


With \Cref{prop:second_order_property_factor} we can prove \Cref{thm:main}, by
showing that every $p \in \Sxhom[2d]$ has the required decomposition.

\begin{proof}[Proof of \Cref{thm:main}]
  Since $\dotp{\Aupp(\vecp{b}) g x_1^{k}, \sig(\vecp{u}) - p} = 0$ for all
  $\vecp{b} \in \Rxhom[d]^2$ and $u_1', u_2'$ are coprime,
  \Cref{lem:coprime_kernel} implies that
  $\dotp{wgx_1^{k} , \sig(\vecp{u}) - p} = 0$ for all $w \in \R[x]$. Since
  $\sig(\vecp{u}')$ has no real roots (\Cref{prop:no-real-roots}), it is coprime
  with $x_1^k$. As $\sig(\vecp{u}')$ is coprime with $g$, it is also coprime
  with $gx_1^k$. Thus \Cref{lem:hgroup} tells us that there exists a sum of
  squares polynomial $s$ such that $p \equiv s \sig(\vecp{u}')
  \pmod{gx_1^k}$. Since $s \sig(\vecp{u}') \in \cone(\sig(\ker(\Aup)))$ by
  \eqref{eq:conesigker}, we are done.
\end{proof}

\section{Geometric Interpretation and Certificates}
\label{sec:cert}
In this section, we provide a geometric interpretation of our proof of
\Cref{thm:main}, which allows us to turn the proof into a certificate. In order
to prove that there is no spurious second-order critical points when minimizing
$f_p(\vecp{u}) = \norm{\sig(\vecp{u}) - p}^2$, we have to show that for all
$\vecp{u} \in \R[x]^r$ and for all $p \in \Sigma[x]$, $\nabla f_p(\vecp{u}) = 0$
and $\nabla^2 f_p(\vecp{u}) \succeq 0$ implies that $f_p(\vecp{u}) = 0$ and
$p = \sig(\vecp{u})$. One way to tackle this problem is to fix $p$ then
characterize the set of $\vecp{u}$ satisfying the second-order critical point
conditions. This is the approach taken by \cite{bhojanapalli2018smoothed} and
related works, where they used an argument based on the dimension of the
subspace generated by the constraints of the SDP. However the SOCP conditions
are nonconvex in $\vecp{u}$. In order to do better than a dimension-counting
argument, our proof takes a different approach. If we fix $\vecp{u}$, the set of
all $p$ satisfying the gradient condition \eqref{eq:gradA} is an affine
subspace, whereas the set of all $p \in \Sigma[x]$ satisfying the Hessian
condition \eqref{eq:hessA} is a convex semidefinite-representable set. We need
to show that these two sets intersect at only one point, $p = \sig(\vecp{u})$
(see \Cref{fig:geometric}).

\begin{figure}[ht]
  \centering
  \def\svgwidth{\columnwidth/2}
  %% Creator: Inkscape 1.2.2 (b0a8486541, 2022-12-01), www.inkscape.org
%% PDF/EPS/PS + LaTeX output extension by Johan Engelen, 2010
%% Accompanies image file '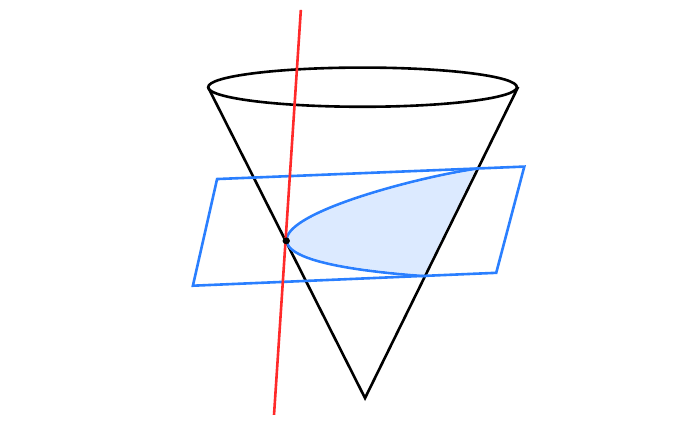' (pdf, eps, ps)
%%
%% To include the image in your LaTeX document, write
%%   \input{<filename>.pdf_tex}
%%  instead of
%%   \includegraphics{<filename>.pdf}
%% To scale the image, write
%%   \def\svgwidth{<desired width>}
%%   \input{<filename>.pdf_tex}
%%  instead of
%%   \includegraphics[width=<desired width>]{<filename>.pdf}
%%
%% Images with a different path to the parent latex file can
%% be accessed with the `import' package (which may need to be
%% installed) using
%%   \usepackage{import}
%% in the preamble, and then including the image with
%%   \import{<path to file>}{<filename>.pdf_tex}
%% Alternatively, one can specify
%%   \graphicspath{{<path to file>/}}
%% 
%% For more information, please see info/svg-inkscape on CTAN:
%%   http://tug.ctan.org/tex-archive/info/svg-inkscape
%%
\begingroup%
  \makeatletter%
  \providecommand\color[2][]{%
    \errmessage{(Inkscape) Color is used for the text in Inkscape, but the package 'color.sty' is not loaded}%
    \renewcommand\color[2][]{}%
  }%
  \providecommand\transparent[1]{%
    \errmessage{(Inkscape) Transparency is used (non-zero) for the text in Inkscape, but the package 'transparent.sty' is not loaded}%
    \renewcommand\transparent[1]{}%
  }%
  \providecommand\rotatebox[2]{#2}%
  \newcommand*\fsize{\dimexpr\f@size pt\relax}%
  \newcommand*\lineheight[1]{\fontsize{\fsize}{#1\fsize}\selectfont}%
  \ifx\svgwidth\undefined%
    \setlength{\unitlength}{197.11842935bp}%
    \ifx\svgscale\undefined%
      \relax%
    \else%
      \setlength{\unitlength}{\unitlength * \real{\svgscale}}%
    \fi%
  \else%
    \setlength{\unitlength}{\svgwidth}%
  \fi%
  \global\let\svgwidth\undefined%
  \global\let\svgscale\undefined%
  \makeatother%
  \begin{picture}(1,0.62066998)%
    \lineheight{1}%
    \setlength\tabcolsep{0pt}%
    \put(0,0){\includegraphics[width=\unitlength,page=1]{geometric.pdf}}%
    \put(0.77091477,0.27232041){\color[rgb]{0.16470588,0.49803922,1}\makebox(0,0)[lt]{\smash{\begin{tabular}[t]{l}$\braces{p \in \Sigma[x] \mid \nabla^2 f_p(\vecp{u}) \succeq 0}$\end{tabular}}}}%
    \put(-0.1065204,0.07901583){\color[rgb]{1,0.16470588,0.16470588}\makebox(0,0)[lt]{\smash{\begin{tabular}[t]{l}$\braces{p \in \R[x] \mid \nabla f_p(\vecp{u}) = 0}$\end{tabular}}}}%
    \put(0.01438042,0.31333519){\color[rgb]{0,0,0}\makebox(0,0)[lt]{\smash{\begin{tabular}[t]{l}$p = \sigma(\vecp{u})$\end{tabular}}}}%
    \put(0,0){\includegraphics[width=\unitlength,page=2]{geometric.pdf}}%
  \end{picture}%
\endgroup%

  \caption{\label{fig:geometric} The geometric interpretation}
\end{figure}

Our proof can be interpreted as constructing a certificate to show that these
two sets only intersect at one point. This is true if and only if the following
optimization problem has a zero optimal objective value:
\begin{align} \label{eq:lagrangian}
  \max_{p \in \Sigma[x]} \min_{Q \succeq 0,\, \vecp{\lambda} \in \R[x]^r}
  \norm{\sig(\vecp{u}) - p}^2 + \nabla f_p(\vecp{u})(\vecp{\lambda})
  + \dotp{Q, \nabla^2 f_p(\vecp{u})}.
\end{align}
Expanding the gradient and Hessian, we get
\begin{align*}
  \nabla f_p(\vecp{u})(\vecp{\lambda}) &= \dotp{\Aup(\vecp{\lambda}), \sig(\vecp{u}) - p} \\
  \dotp{Q, \nabla^2 f_p(\vecp{u})} &= \sum_i \dotp{\sig(\vecp{v}^{(i)}), \sig(\vecp{u}) - p}
                                     + 2 \norm{\Aup(\vecp{v}^{(i)})}^2.
\end{align*}
where $Q = \sum_i \vecp{v}^{(i)} {\vecp{v}^{(i)}}^\top$. If for every
$p \in \Sigma[x]$ we can find $\vecp{\lambda}$ and $Q$ such that
\begin{align}
  \label{eq:lambdaQ}
  \nabla f_p(\vecp{u})(\vecp{\lambda}) + \dotp{Q, \nabla^2 f_p(\vecp{u})}
  = -\norm{\sig(\vecp{u}) - p}^2,
\end{align}
then the objective of \eqref{eq:lagrangian} is at most 0 and cannot be positive,
showing that $p = \sig(\vecp{u})$ is the only point satisfying the gradient
and Hessian conditions. Since $\Aup(\vecp{u}) = \sig(\vecp{u})$, this is
equivalent to finding $\vecp{\lambda}$ and $Q$ such that
$\nabla f_p(\vecp{u})(\vecp{\lambda}) + \dotp{Q, \nabla^2 f_p(\vecp{u})} =
\dotp{p, \sig(\vecp{u}) - p}$.

\subsection{Warmup}
As a warmup, we construct such a certificate if $h=1$ in the decomposition of
$\vecp{u}$ in \Cref{prop:decomp}. Recall that in this case $g = \gcd(u_1, u_2)$,
$u_1 = gu_1', u_2 = gu_2'$ and $\sig(\vecp{u}')$ is coprime with
$g \in \Rxhom[m]$. Therefore, by \Cref{lem:hgroup}, there exists
$s \in \Sxhom[2m]$ such that $p \equiv s \sig(\vecp{u}') \pmod{g}$. Let
\begin{align*}
  Q & = s
      \begin{bmatrix}
    {u_2'}^2 & -u_1'u_2'\\
    -u_1'u_2' & {u_1'}^2
  \end{bmatrix}.
\end{align*}
As both $\sig(\vecp{u})$ and $p - s \sig(\vecp{u}')$ are divisible by $g$,
\Cref{lem:coprime_kernel} implies that there exists $\vecp{\lambda}$ such that:
\begin{align*}
  \Aup(\vecp{\lambda}) & = -\sig(\vecp{u}) + p - s \sig(\vecp{u}').
\end{align*}
These values of $\vecp{\lambda}$ and $Q$ give
\begin{align*}
  \nabla f_p(\vecp{u})(\vecp{\lambda})
  & = -\norm{\sig(\vecp{u}) - p}^2
    - \dotp{s\sig(\vecp{u}'), \sig(\vecp{u}) - p}\\
  \dotp{Q, \nabla^2 f_p(\vecp{u})}
  &= \dotp{s \sig(\vecp{u}'), \sig(\vecp{u}) - p},
\end{align*}
hence taking the sum we have the identity \eqref{eq:lambdaQ}.

\subsection{Certificate}
\label{sec:cert-general}
Now we can present the proof of \Cref{thm:main} in the form of a
certificate. First we decompose $\vecp{u}\in \Rxhom[d]^2$ as in
\Cref{prop:decomp}. Since $g$ is coprime with $\sig(\vecp{u}')$ and
$\sig(\vecp{u}')$ has no real roots, $g x_1^k$ is also coprime with
$\sig(\vecp{u}')$. Then by \Cref{lem:hgroup}, there exists
$s \in \Sxhom[2(m+k)]$ such that $p \equiv s \sig(\vecp{u}') \pmod{g x_1^k}$.
Next we apply \Cref{lem:coprime_kernel} to find $\vecp{b}^0 \in \Rxhom[d]^2$ so
that:
\begin{align*}
  2 \frac{x_1^k}{h} \Aup(\vecp{b}^0) = 2g x_1^k\Aupp(\vecp{b}^0) = p - s \sig(\vecp{u}').
\end{align*}
As in the proof of \Cref{prop:second_order_property_factor}, we write
$h = \prod_{i=1}^{k/2} r_i$.  For every $1 \le j \le k/2$, since $r_j$ divides
$\sig(\vecp{u'})$, by \Cref{lem:coprime_kernel} there exists
$\vecp{b}^j \in \Rxhom[d]^2$ so that
\begin{align*}
  \textstyle
  2 g x_1^{k-2j} \Aupp(\vecp{b}^j) \prod_{i=1}^j r_i
  = -  g^2 x_1^{2k-4(j-1)} \sig(\vecp{u'}) \prod_{i=1}^{j-1} r_i^2.
\end{align*}
Given any $\vecp{a} = (a_1, a_2) \in \Rxhom[d]^2$, we define
$\vecp{\bar{a}} := (a_2, -a_1)$. Given a parameter $\eta \in \R$, for every
$0 \le j \le k/2$ let $\eta_j = \eta^{3^j}$ and
\begin{align*}
  \textstyle
  \vecp{v}^j = \eta_j^{3/2} g x_1^{k-2j} \vecp{\bar{u}'} \prod_{i=1}^j r_i
  + \eta_j^{-1/2} \vecp{\bar{b}}^j.
\end{align*}
Then define
\begin{align*}
  Q &= s \vecp{\bar{u}'} \vecp{\bar{u}'}^\top + \frac{1}{\eta}\sum_{j=0}^{k/2} \vecp{v}^j {\vecp{v}^j}^\top \\
  \vecp{\lambda} &= -(1 + \eta^{-1}\eta_{k/2+1}) \vecp{u}.
\end{align*}
Since $\Aup(\vecp{\bar{u}'}) = 0$, $\sig(\vecp{\bar{u}'}) = \sig(\vecp{u'})$,
$\sig(\vecp{\bar{b}}^j) = \sig(\vecp{b}^j)$,
$\mathcal{A}_{\vecp{\bar{u}'}}(\vecp{\bar{b}}^j) = \Aupp(\vecp{b}^j)$ and
$\eta_{j+1} = \eta_j^3$, we have
\begin{align*}
  \frac{1}{\eta}\sum_{j=0}^{k/2}\sig(\vecp{v}^j)
  &= \frac{1}{\eta}\sum_{j=0}^{k/2} \paren{ \textstyle
    \eta_j^3 g^2 x_1^{2k-4j} \sig(\vecp{u'}) \prod_{i=1}^j r_i^2
    + 2\eta_j g x_1^{k-2j}  \Aupp(\vecp{b}^j) \prod_{i=1}^j r_i
    + \eta_j^{-1} \sig(\vecp{b}^j)} \\
  &= p - s\sig(\vecp{u'}) + \eta^{-1}\eta_{k/2+1}\sig(\vecp{u})
    + \eta^{-1}\sum_{j=0}^{k/2} \eta_j^{-1}\sig(\vecp{b}^j), \\
  \Aup(\vecp{v}^j)
  &= \eta_j^{-1/2} \Aup(\vecp{\bar{b}}^j),\\
  \dotp{Q, \nabla^2 f_p(\vecp{u})}
  &= \dotp{p + \eta^{-1}\eta_{k/2+1}\sig(\vecp{u})
    + \eta^{-1}\sum_{j=0}^{k/2} \eta_j^{-1}\sig(\vecp{b}^j), \sig(\vecp{u}) - p}
    + \sum_{j=0}^{k/2}2\eta_j^{-1} \eta^{-1} \norm{\Aup(\vecp{\bar{b}}^j)}^2, \\
  \nabla f_p(\vecp{u})(\vecp{\lambda}) &= -(1 + \eta^{-1}\eta_{k/2+1})\dotp{\sig(\vecp{u}) , \sig(\vecp{u}) - p}.
\end{align*}
So we have proven the identity
\begin{align} \label{eq:cert-general}
  \nabla f_p(\vecp{u})(\vecp{\lambda}) + \dotp{Q, \nabla^2 f_p(\vecp{u})}
  & =
    -\norm{\sig(\vecp{u}) - p}^2  +
    \sum_{j=0}^{k/2} \frac{1}{\eta \eta_j}
    \paren{\dotp{\sig(\vecp{b}^j), \sig(\vecp{u}) - p} + 2\norm{\Aup(\vecp{\bar{b}}^j)}^2}.
\end{align}
This implies that for every $\eta > 0$ and every $\vecp{u}$ that satisfies
$\nabla f_p(\vecp{u}) = 0$ and $\nabla^2 f_p(\vecp{u}) \succeq 0$,
\begin{align*}
  \norm{\sig(\vecp{u}) - p}^2 \le
    \sum_{j=0}^{k/2} \eta^{-(3^j+1)}
    \paren{\dotp{\sig(\vecp{b}^j), \sig(\vecp{u}) - p} + 2\norm{\Aup(\vecp{\bar{b}}^j)}^2}.
\end{align*}
Since $\vecp{b}^j$ does not depend on $\eta$, we can make the right hand side
arbitrarily small by taking the limit $\eta \to \infty$. Thus we can conclude
that $\norm{\sig(\vecp{u}) - p} = 0$.

\section{Extensions and Generalizations}
\label{sec:extensions}
The certificate interpretation discussed in the previous section allows us to
generalize \Cref{thm:main} to other settings, such as projecting onto the sum of
squares cone, certifying nonnegativity on intervals and imposing linear
constraints on coefficients of univariate sum of squares polynomials.

\subsection{Projection Onto the Sum of Squares Cone}
A natural question to consider is what happens to the optimization landscape of
$f_p(\vecp{u})$ when $p$ cannot be expressed as a sum of squares. In this case
the objective $f_p(\vecp{u})$ can never be zero, but we show that all SOCPs have
the same objective value, which is the projection of $p$ to the sum of squares
cone.

\begin{corollary} \label{cor:projection} For all $\vecp{u} \in \Rxhom[d]^2$
  where $\nabla f_p(\vecp{u}) = 0$ and $\nabla^2 f_p (\vecp{u}) \succeq 0$,
  $\sig(\vecp{u})$ is the projection of $p$ to the sum of squares cone with
  respect to the inner product used to define $f_p$. In other words,
  $f_p(\vecp{u}) = \norm{\sig(\vecp{u})-p}^2 \le \norm{q - p}^2$ for all
  $q \in \Sxhom[2d]$.
\end{corollary}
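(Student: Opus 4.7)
The plan is to verify the variational characterization of the projection onto the closed convex cone $\Sxhom[2d]$, namely that
\[
  \dotp{q - \sig(\vecp{u}), p - \sig(\vecp{u})} \le 0 \quad \text{for every } q \in \Sxhom[2d].
\]
Combined with the automatic membership $\sig(\vecp{u}) \in \Sxhom[2d]$, this standard inequality forces $\sig(\vecp{u}) = \arg\min_{q \in \Sxhom[2d]} \norm{q - p}^2$, which is exactly the statement of the corollary.

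The key idea is to re-run the certificate construction of \Cref{sec:cert-general} with an arbitrary $q \in \Sxhom[2d]$ playing the role of $p$. That construction depends on the ``target form'' only through a single invocation of \Cref{lem:hgroup}, so it carries over verbatim to any $q \in \Sxhom[2d]$: the decomposition of $\vecp{u}$ from \Cref{prop:decomp}, the factorization $h = \prod_i r_i$, and the weights $\eta, \eta_j$ are intrinsic to $\vecp{u}$. This yields $\vecp{\lambda}_q \in \Rxhom[d]^2$, $Q_q \succeq 0$ with $Q_q = \sum_i \vecp{v}^{q,i} {\vecp{v}^{q,i}}^{\top}$, and auxiliary forms $\vecp{b}^{q,j}$ (all independent of $\eta$), for which the certificate identity \eqref{eq:cert-general} reads
\[
  \nabla f_q(\vecp{u})(\vecp{\lambda}_q) + \dotp{Q_q, \nabla^2 f_q(\vecp{u})} = -\norm{\sig(\vecp{u}) - q}^2 + R_q(\eta),
\]
with $R_q(\eta) \to 0$ as $\eta \to \infty$, and the intermediate identity of \Cref{sec:cert-general} becomes
\[
  \Aup(\vecp{\lambda}_q) + \sum_i \sig(\vecp{v}^{q,i}) = q - \sig(\vecp{u}) + \eta^{-1} \sum_j \eta_j^{-1} \sig(\vecp{b}^{q,j}).
\]

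Next I would translate this identity from $f_q$ back to $f_p$. Since \eqref{eq:gradA}--\eqref{eq:hessA} make $\nabla f_q - \nabla f_p$ and $\nabla^2 f_q - \nabla^2 f_p$ linear in $p - q$, the SOCP hypotheses $\nabla f_p(\vecp{u}) = 0$ and $\dotp{Q_q, \nabla^2 f_p(\vecp{u})} \ge 0$ (the latter from $Q_q \succeq 0$ and $\nabla^2 f_p(\vecp{u}) \succeq 0$) convert the identity into a one-sided inequality. Substituting the formula for $\Aup(\vecp{\lambda}_q) + \sum_i \sig(\vecp{v}^{q,i})$ and applying the polarization identity $\norm{\sig(\vecp{u}) - q}^2 + \dotp{q - \sig(\vecp{u}), p - q} = \dotp{q - \sig(\vecp{u}), p - \sig(\vecp{u})}$ collapses the inequality to
\[
  \dotp{q - \sig(\vecp{u}), p - \sig(\vecp{u})} \le R_q(\eta) - \eta^{-1} \sum_j \eta_j^{-1} \dotp{\sig(\vecp{b}^{q,j}), p - q}.
\]
Since $\vecp{b}^{q,j}$ does not depend on $\eta$, letting $\eta \to \infty$ yields the target variational inequality.

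The main obstacle will be purely bookkeeping: one must carefully re-examine the derivation in \Cref{sec:cert-general} to confirm that the dependence on the target form is linear and entirely localized to the step where \Cref{lem:hgroup} is invoked, so that the substitution $p \leftrightarrow q$ is mechanical. Nothing genuinely new beyond \Cref{sec:cert-general} is required, which is the whole point of recasting the main theorem as an explicit certificate.
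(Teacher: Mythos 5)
Your proposal is correct and takes essentially the same route as the paper: the paper's (very terse) proof likewise reuses the certificate \eqref{eq:cert-general} with the arbitrary sum of squares $q$ fed into \Cref{lem:hgroup}, and concludes via the variational inequality $\dotp{\sig(\vecp{u})-q,\sig(\vecp{u})-p}\le 0$. Your bookkeeping (build the certificate for $f_q$, then transfer to $f_p$ by linearity of the gradient and Hessian in the target form) is just a reorganization of the same computation, and the limit $\eta\to\infty$ is handled correctly.
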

\begin{proof}
  \Cref{cor:projection} can be proved by a simple modification of the
  certificate \eqref{eq:cert-general}. Although $p$ is no longer a sum of
  squares, we can use \eqref{eq:cert-general} to show that for all
  $q \in \Sxhom[2d]$,
  \begin{align*}
    \dotp{\sig(u) - q,\sig(u) - p} \le 0.
  \end{align*}
  This is exactly the variational characterization of projection onto the convex
  cone $\Sxhom[2d]$.
\end{proof}

\subsection{Certifying Nonnegativity on Intervals}
Suppose we wish to certify that a univariate polynomial $p(x)$ is nonnegative in
a union of intervals $I = \bigcup_{i=1}^m I_i$ where
$I_i = \{x \in \R \mid \alpha_i \le x \le \beta_i\}$. This can be accomplished
by finding a decomposition
\begin{align*}
  p(x) = \sum_i^m a_i(x) q_i(x),
\end{align*}
where $a_i(x)$ are fixed polynomials depending on the intervals $I_i$ and
$q_i(x)$ are sum of squares polynomials (see, e.g., \cite[Theorem
3.72]{blekherman_semidefinite_2013}). This objective can also be written as a
nonconvex optimization problem by the decomposition
$q_i(x) = \sum_{j=1}^r u_{ij}(x)^2 = \sig(\vecp{u}_i)$. If we let
\begin{align*}
  s(\vecp{u})(x) = \sum_{i=1}^m a_i(x) \sig(\vecp{u}_i),
\end{align*}
then the objective $f^I_p(\vecp{u})$ and its gradient and Hessian can be written as
\begin{align*}
  f^I_p(\vecp{u})
  &= \norm{\sum_{i=1}^m a_i \sig(\vecp{u}_i) - p}^2 = \norm{s(\vecp{u})-p}^2,\\
  \nabla f^I_p(\vecp{u})(\vecp{v})
  &= \dotp{\sum_{i=1}^m a_i \mathcal{A}_{\vecp{u}_i}(\vecp{v}_i), s(\vecp{u})-p}, \\
  \nabla^2 f^I_p(\vecp{u})(\vecp{v},\vecp{v})
  &= \dotp{\sum_{i=1}^m a_i \sig(\vecp{v}_i), s(\vecp{u})-p}
    + 2\norm{\sum_{i=1}^m a_i \mathcal{A}_{\vecp{u}_i}(\vecp{v}_i)}^2.
\end{align*}
\begin{corollary} \label{cor:intervals} Suppose $r \ge 2$ and we are given
  $p = \sum_{i=1}^m a_i q_i \in \Rxhom[2d+k]$ where $a_i \in \Rxhom[k]$,
  $q_i \in \Sxhom[2d]$. For all $\vecp{u} \in \Rxhom[d]^{m \times r}$ such that
  $\nabla f^I_p(\vecp{u}) = 0$ and $\nabla f^I_p(\vecp{u}) \succeq 0$,
  $f^I_p(\vecp{u}) = 0$.
\end{corollary}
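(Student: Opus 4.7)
The plan is to extend the certificate of Section~\ref{sec:cert-general} block-by-block across $i = 1, \ldots, m$, using the given representation $p = \sum_i a_i q_i$ as the template against which each $\vecp{u}_i$ is certified. Since each $\vecp{u}_i$ has at least two components, I restrict attention to its first two columns and apply \Cref{prop:decomp} to obtain $(u_{i,1}, u_{i,2}) = (u'_{i,1} g_i h_i, u'_{i,2} g_i h_i)$ with $k_i = \deg(h_i)$; variations in the remaining $r-2$ columns will simply be set to zero in the certificate.

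For each $i$, apply \Cref{lem:hgroup} with modulus $g_i x_1^{k_i}$, coprime partner $\sig(\vecp{u}'_i)$, and target $q_i$ to produce $s_i \in \Sxhom[2(m_i + k_i)]$ such that $q_i \equiv s_i \sig(\vecp{u}'_i) \pmod{g_i x_1^{k_i}}$. Then mimic the construction of Section~\ref{sec:cert-general} verbatim, with $(\vecp{u}, p, g, h, k, s) \mapsto (\vecp{u}_i, q_i, g_i, h_i, k_i, s_i)$, to build block-local auxiliary polynomials $\vecp{b}_i^j$, a multiplier $\vecp{\lambda}_i$, and a PSD matrix
\begin{align*}
  Q_i = s_i \vecp{\bar{u}}'_i (\vecp{\bar{u}}'_i)^\top + \frac{1}{\eta}\sum_{j=0}^{k_i/2} \vecp{v}_i^j (\vecp{v}_i^j)^\top,
\end{align*}
with $\vecp{v}_i^j$ and $\eta > 0$ defined exactly as in Section~\ref{sec:cert-general}. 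Assemble a global certificate by concatenating $\vecp{\lambda} = (\vecp{\lambda}_1, \ldots, \vecp{\lambda}_m)$ and taking $Q$ to be the block-diagonal matrix with diagonal blocks $Q_i \succeq 0$.

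The critical observation making the assembly work is that this $Q$ is a sum of rank-one matrices $\vecp{v}^{(k)} (\vecp{v}^{(k)})^\top$ whose supports lie in a single block $i$, so $\sum_{i'} a_{i'} \mathcal{A}_{\vecp{u}_{i'}}(\vecp{v}_{i'}^{(k)}) = a_i \mathcal{A}_{\vecp{u}_i}(\vecp{v}_i^{(k)})$ with no cross-block terms. Consequently, $\dotp{Q, \nabla^2 f^I_p(\vecp{u})}$ splits cleanly into a sum over $i$ that mirrors the single-block identity, weighted linearly by $a_i$ in the gradient-like terms and by $a_i^2$ in the squared-norm terms. Combined with the $a_i$-linear contribution of $\nabla f^I_p(\vecp{u})(\vecp{\lambda})$, summing the block-$i$ analog of \eqref{eq:cert-general} yields
\begin{align*}
  \nabla f^I_p(\vecp{u})(\vecp{\lambda}) + \dotp{Q, \nabla^2 f^I_p(\vecp{u})}
  = -\norm{s(\vecp{u}) - p}^2 + \sum_{i=1}^m \sum_{j=0}^{k_i/2} \eta^{-(3^j+1)}\bigl( a_i \dotp{\sig(\vecp{b}_i^j), s(\vecp{u}) - p} + 2 a_i^2 \norm{\mathcal{A}_{\vecp{u}_i}(\vecp{\bar{b}}_i^j)}^2 \bigr).
\end{align*}
The SOCP hypothesis makes the left-hand side nonpositive, so sending $\eta \to \infty$ drives the right-hand error to zero and forces $\norm{s(\vecp{u}) - p} = 0$, proving $f^I_p(\vecp{u}) = 0$.

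The main obstacle is handling the coupling introduced by the polynomial (rather than scalar) multipliers $a_i$ across blocks. The block-diagonal choice of $Q$ sidesteps any coupling in the quadratic Hessian term, while the extra $a_i^2$ factor appearing in the error terms is harmless because they already carry the vanishing $\eta^{-(3^j+1)}$ prefactor inherited from the base certificate, so the limit argument goes through unchanged.
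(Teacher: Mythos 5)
Your proposal is correct and follows essentially the same route as the paper: the paper likewise builds the certificate block by block (choosing $\vecp{v}_j = 0$ for $j \ne i$, so $Q$ is block-diagonal and the Hessian term decouples with no cross-block interaction), obtains a per-block identity whose error terms vanish as $\eta \to \infty$, and sums over $i$; the only cosmetic difference is that the paper keeps the $-\vecp{u}$ contribution as a single global gradient evaluation $\nabla f^I_p(\vecp{u})(-\vecp{u}) = -\dotp{s(\vecp{u}), s(\vecp{u})-p}$ instead of distributing it into the $\vecp{\lambda}_i$. One slip in your final step: under the SOCP hypothesis the left-hand side $\nabla f^I_p(\vecp{u})(\vecp{\lambda}) + \dotp{Q, \nabla^2 f^I_p(\vecp{u})}$ is \emph{nonnegative} (the gradient term vanishes and $Q \succeq 0$), not nonpositive; it is this nonnegativity that yields $\norm{s(\vecp{u})-p}^2 \le \sum_i C_i$ and hence the conclusion as $\eta \to \infty$.
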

\begin{proof}
  We can prove this by constructing a certificate of the form
  \eqref{eq:cert-general}. For each $i$ we can choose $\vecp{v}_j = 0$ for all
  $j \ne i$, then follow the reasoning in \Cref{sec:cert-general} to find
  $\vecp{\lambda_i}$ and $Q_i$ such that for all $\eta_i > 0$,
  \begin{align} \label{eq:cert-multivariate}
    \nabla f^I_p(\vecp{u})(\vecp{\lambda_i}) + \dotp{Q_i, \nabla^2 f^I_p(\vecp{u})}
    = \dotp{a_iq_i, s(\vecp{u})-p} + C_i,
  \end{align}
  where $C_i$ is a value that can be made arbitrarily small by taking a
  limit. We then sum \eqref{eq:cert-multivariate} for all $i$, along with the
  equality
  $\nabla f^I_p(\vecp{u})(-\vecp{u}) = -\dotp{s(\vecp{u}), s(\vecp{u})-p}$ to
  get $\norm{s(\vecp{u})-p}^2 \le \sum_i C_i$, which implies that
  $f^I_p(\vecp{u}) = \norm{s(\vecp{u})-p}^2 = 0$.
\end{proof}

\subsection{Sum of Squares Optimization}
More generally, we can consider the problem of finding a feasible point in the
intersection of the cone $\Sx[2d]$ with any affine subspace. This allows us to
solve sum of squares optimization problems involving univariate polynomials. Let
$\mathcal{B} : \Rxhom[2d] \rightarrow \R^m$ be a linear map. Given $b \in \R^m$,
we want to find $p \in \Sxhom[2d]$ so that $\mathcal{B}(p) = b$. This is
equivalent to minimizing the quadratic-penalized problem
\begin{align} \label{eq:seminorm}
  f_{\mathcal{B}}(\vecp{u}) = \norm{\mathcal{B}(\sig(\vecp{u})) - b}^2.
\end{align}
\begin{corollary} \label{cor:sos-opt}
  Suppose there exists $p \in \Sxhom[2d]$ such that $\mathcal{B}(p) = b$. Then
  $\nabla f_{\mathcal{B}}(\vecp{u}) = 0$ and
  $\nabla^2 f_{\mathcal{B}}(\vecp{u}) \succeq 0$ implies that
  $f_{\mathcal{B}}(\vecp{u}) = 0$.
\end{corollary}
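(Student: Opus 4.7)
The plan is to reduce \Cref{cor:sos-opt} to the certificate identity \eqref{eq:cert-general} by reinterpreting $f_{\mathcal{B}}$ as the squared seminorm of $\sig(\vecp{u}) - p$ for a suitable fixed $p$. By assumption, there exists $p \in \Sxhom[2d]$ with $\mathcal{B}(p) = b$; fix any such $p$. Define the symmetric bilinear form $\dotp{q_1, q_2}_{\mathcal{B}} := \dotp{\mathcal{B}(q_1), \mathcal{B}(q_2)}_{\R^m}$ on $\Rxhom[2d]$, which is positive semidefinite but generally not positive definite when $\mathcal{B}$ has a nontrivial kernel. Then
\begin{equation*}
  f_{\mathcal{B}}(\vecp{u}) = \norm{\mathcal{B}(\sig(\vecp{u})) - \mathcal{B}(p)}^2 = \norm{\sig(\vecp{u}) - p}_{\mathcal{B}}^2,
\end{equation*}
so the derivation of \eqref{eq:grad} and \eqref{eq:hess}, which uses only bilinearity, yields expressions for $\nabla f_{\mathcal{B}}(\vecp{u})$ and $\nabla^2 f_{\mathcal{B}}(\vecp{u})$ identical to \eqref{eq:gradA} and \eqref{eq:hessA} with $\dotp{\cdot,\cdot}$ replaced throughout by $\dotp{\cdot,\cdot}_{\mathcal{B}}$.

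The key observation is that the construction of the certificate in \Cref{sec:cert-general} is purely algebraic: the decomposition $\vecp{u} = (u_1' g h, u_2' g h)$ from \Cref{prop:decomp}, the sum of squares $s$ obtained from \Cref{lem:hgroup}, the forms $\vecp{b}^j$ obtained via \Cref{lem:coprime_kernel}, the dual variable $\vecp{\lambda}$, and the PSD matrix $Q = s \vecp{\bar{u}'} \vecp{\bar{u}'}^\top + \eta^{-1}\sum_j \vecp{v}^j {\vecp{v}^j}^\top$ depend only on $\vecp{u}$ and $p$ as polynomials, not on the inner product used to define $f$. Consequently, the identity \eqref{eq:cert-general} continues to hold verbatim with $\dotp{\cdot,\cdot}$ replaced by $\dotp{\cdot,\cdot}_{\mathcal{B}}$ and $\norm{\cdot}$ by $\norm{\cdot}_{\mathcal{B}}$, since each step of its derivation relies only on bilinearity of the underlying form.

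Once this semi-inner-product version of \eqref{eq:cert-general} is in hand, the conclusion is immediate. The SOCP conditions give $\nabla f_{\mathcal{B}}(\vecp{u})(\vecp{\lambda}) = 0$ and $\dotp{Q, \nabla^2 f_{\mathcal{B}}(\vecp{u})} \ge 0$ because $Q \succeq 0$. The residual terms on the right-hand side scale as $\eta^{-(3^j+1)}$ and can be made arbitrarily small by letting $\eta \to \infty$, while $\vecp{b}^j$ is independent of $\eta$. This forces $\norm{\sig(\vecp{u}) - p}_{\mathcal{B}}^2 \le 0$, i.e., $\mathcal{B}(\sig(\vecp{u})) = \mathcal{B}(p) = b$, hence $f_{\mathcal{B}}(\vecp{u}) = 0$. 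The main obstacle in this plan is confirming that each algebraic manipulation in \Cref{sec:cert-general}, in particular the way the cross terms cancel when expanding $\sig(\vecp{v}^j)$, survives the passage from an inner product to a merely positive semidefinite bilinear form; this is routine but must be checked line by line to rule out any implicit appeal to non-degeneracy.
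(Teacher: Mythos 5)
Your proposal is correct and takes essentially the same route as the paper: the paper's proof likewise writes the gradient and Hessian in terms of $\mathcal{B}$ and invokes the linearity of $\mathcal{B}$ to reuse the certificate \eqref{eq:cert-general} verbatim, which is exactly your observation that the certificate data $(\vecp{\lambda}, Q, \vecp{b}^j, s)$ depend only on $\vecp{u}$ and $p$ and that the derivation uses only bilinearity and positive semidefiniteness of the induced form. Your explicit framing via the seminorm $\norm{\cdot}_{\mathcal{B}}$, and the remark that degeneracy only costs you $\sig(\vecp{u})=p$ but not $\mathcal{B}(\sig(\vecp{u}))=b$, is a slightly more careful writeup of the same argument.
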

\begin{proof}
  The gradient and Hessian of the objective \eqref{eq:seminorm} can be written
  as
  \begin{align*}
    \frac{1}{4} \nabla f_{\mathcal{B}}(\vecp{u})(\vecp{v})
    &= \dotp{\mathcal{B}(\Aup(\vecp{v})), \mathcal{B}(\sig(\vecp{u}) - p)} \\
    \frac{1}{4} \nabla^2 f_{\mathcal{B}}(\vecp{u})(\vecp{v}, \vecp{v})
    &= \dotp{\mathcal{B}(\sig(\vecp{v})), \mathcal{B}(\sig(\vecp{u}) - p)}
      + 2\norm{\mathcal{B}(\Aup(\vecp{v}))}^2.
  \end{align*}
  Thus by the linearity of $\mathcal{B}$ we can use the same construction as in
  the certificate \eqref{eq:cert-general} to show that
  $f_{\mathcal{B}}(\vecp{u}) = 0$.
\end{proof}

\section{Implementation and Experiments}
\label{sec:experiments}
In this section we describe an efficient implementation of finding a sum of
squares decomposition of trigonometric polynomials. A trigonometric polynomial
of degree-$d$ is defined by $2d+1$ coefficients and has the form
\begin{align*}
  p(t) = a_0 + \sum_{k=1}^d (a_k \cos(kt) + a_{-k} \sin(kt)).
\end{align*}
By the substitution $\cos(t) = \frac{1-x^2}{1+x^2}$ and
$\sin(t) = \frac{2x}{1+x^2}$, $p(x)$ becomes a rational function with the
denominator a power of $1+x^2$ and the numerator a degree-$2d$ polynomial in
$x$. Thus certifying the nonnegativity of the numerator is equivalent to
certifying the nonnegativity of $p(t)$. By this correspondence the result of
\Cref{thm:main} also applies to trigonometric polynomials.

Since the proof of \Cref{thm:main} does not depend on the norm used for
$f_p(\vecp{u}) = \norm{p - \sum_i u_i^2}^2$, we can choose one most suitable for
the gradient computation. For the rest of this section we assume that $d$ is
even for simplicity of notation; a similar decomposition exists for odd $d$ by
choosing ``half-angles'' (see \cite{Lofbergcoefficientssamplesnew2004} for more
details). We then choose the inner product defined by evaluation at $2d+1$
points on the circle,
\begin{align*}
  \dotp{p, q} = \frac{1}{2d+1} \sum_{k=1}^{2d+1} p(x_k) q(x_k), \quad x_k = \frac{2k\pi}{2d+1}.
\end{align*}
Since a trigonometric polynomial of degree-$d$ is uniquely defined by evaluation
on $2d+1$ unique points, $\norm{p(x)}^2 = 0$ if and only if $p$ is identically
zero.

Let $U \in \R^{(d+1) \times r}$ be a matrix with column $U_i$ representing the
coefficients of $u_i(x)$, and $B \in \R^{(d+1) \times (2d+1)}$ be the evaluation
map on $2d+1$ points with columns
\begin{align*}
  B_k = \bmat{1 & \cos(x_k) & \cdots & \cos(\frac{d}{2} x_k) &
              \sin(x_k) & \cdots & \sin(\frac{d}{2} x_k)}^\top,
\end{align*}
so that $B_k^\top U_i = u_i(x_k)$. Let $\bar{p}$ be the vector of coefficients of
$p(x)$, so that $B_k^\top \bar{p} = p(x_k)$. Then we can write
\begin{align*}
  f_p(U) &= \frac{1}{2d+1} \sum_{k=1}^{2d+1} \paren{\norm{U^\top B_k}^2 - p(x_k)}^2 \\
  \nabla f_p(U) &= \frac{4}{2d+1} U^\top B \Diag\paren{\norm{U^\top B_k}^2 - p(x_k)} B^\top,
\end{align*}
where $\Diag\paren{\norm{U^\top B_k}^2 - p(x_k)}$ is a diagonal matrix with
$\norm{U^\top B_k}^2 - p(x_k)$ as the $k$-th diagonal entry. Since matrix-vector
multiplication by $B$ is a equivalent to a discrete Fourier transform,
$\nabla f_p(U)$ can be computed in $O(r d \log d)$ time using the
FFT. \Cref{thm:main} shows that spurious local minima do not exist when
$r \ge 2$, so we can pick $r$ to be a constant and obtain a near-linear
iteration complexity. This is in contrast to other SDP-based algorithms and
custom interior point methods for solving this problem, which run into
computational difficulties even when $2d = \text{10,000}$.

\begin{table}[ht]
  \centering
  \subfloat[Varying degree, $r=4$]{\begin{tabular}[t]{rr@{\,}lr@{\,}l}
\toprule
Degree & \multicolumn{2}{c}{Time (s)} & \multicolumn{2}{c}{Iterations} \\
\midrule

2,000 & 2 & (1 -- 2) & 340 & (306 -- 384) \\
10,000 & 6 & (5 -- 6) & 530 & (497 -- 592) \\
20,000 & 9 & (8 -- 10) & 632 & (587 -- 695) \\
100,000 & 53 & (46 -- 59) & 1126 & (980 -- 1248) \\
200,000 & 160 & (139 -- 174) & 1375 & (1212 -- 1532) \\
1,000,000 & 1461 & (1212 -- 1532) & 2303 & (1934 -- 2437) \\
\bottomrule
\end{tabular}

 \label{table:timing}}
  \hfill
  \subfloat[Varying $r$, degree $2d = 10,000$]{\begin{tabular}[t]{rccc}
\toprule
$r$ & Time & Iters & FFT Calls \\
\midrule

2 & 50 & 4124 & 20618 \\
3 & 9 & 896 & 6272 \\
4 & 6 & 530 & 4774 \\
5 & 5 & 446 & 4900 \\
6 & 5 & 396 & 5142 \\
7 & 5 & 374 & 5618 \\
\bottomrule
\end{tabular}

 \label{table:r}}
  \caption{Time and iterations to convergence for sum of squares decomposition
    of random nonnegative trigonometric polynomials. All values are median of
    50 runs (with range based on 25th and 75th percentile).
    \Cref{table:timing} fixes the rank $r$ and and varies the polynomial degree,
    whereas \Cref{table:r} fixes the polynomial degree and varies the rank $r$. }
\end{table}
We implemented our algorithm for finding the sum of squares decomposition of
trigonometric polynomials in Julia \footnote{The code and data required to
  reproduce the results in this section can be found in
  \cite{legat2023-low-rank-paper-code}.}, using the
\texttt{FFTW.jl}~\cite{frigo2005design} package for FFTs to compute
$\nabla f_p(U)$ and the \texttt{NLopt.jl}~\cite{johnson2014nlopt} package to
minimize $f_p(U)$ using a first-order algorithm (L-BFGS). We performed the
timing experiments on Intel Xeon Platinum 8260 processors, allocating at least
$r+1$ cores to each run, using polynomials of degree-$2d$ ranging from 2,000 to
1,000,000. The test polynomials are generated with coefficients drawn from a
standard normal distribution, with a constant coefficient added so that they all
have a small positive minimum value. $U$ is initialized with a small random
value; its magnitude depends on the size of the problem. The algorithm is
terminated when the relative error for each entry of $U$ is on the order of
$10^{-7}$. Although $r=2$ is sufficient, the results in \Cref{table:r} shows
that $r=4$ minimizes the total computational cost, as measured by the total
number of FFT calls (by far the most expensive operation) needed for
convergence. In addition, since the matrix-vector products can be easily
parallelized across multiple threads, increasing $r$ does not incur a
significant per-iteration cost if sufficient threads are used. Thus we choose
$r=4$ for our large-scale experiments in
\Cref{table:timing}. \Cref{fig:iterations} plots the convergence rate of 20
instances, and we can see that they achieve a linear convergence rate. This is
in contrast to grid-based methods \cite{wu1999fir,dumitrescu2007positive} which
scales sublinearly in accuracy.

\begin{figure}[ht]
  \centering
  \includegraphics[scale=0.5]{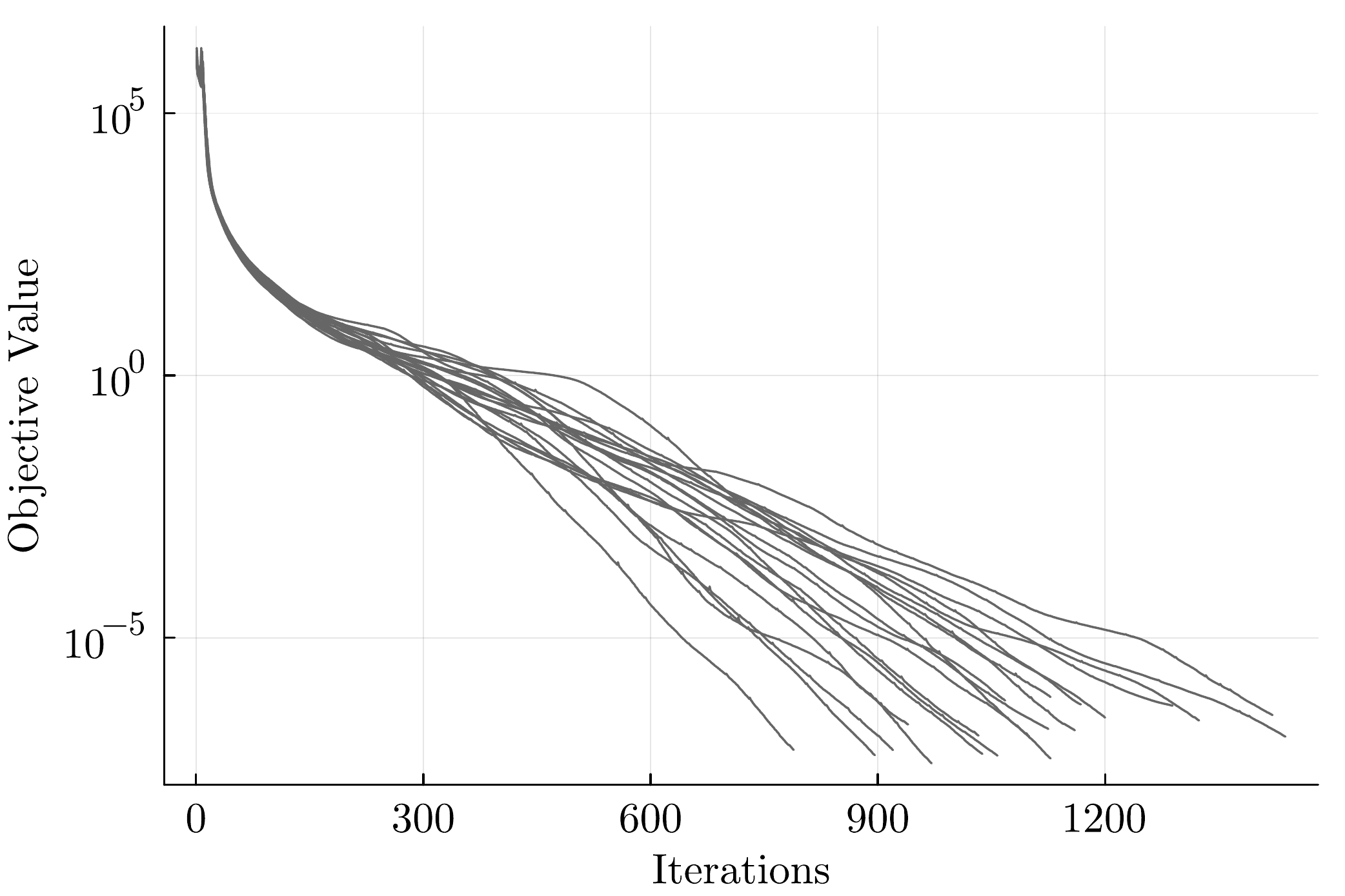}
  \caption{\label{fig:iterations} Value of $f_p(\vecp{u})$ against iterations of
    L-BFGS for computing the sum of squares decomposition of 20 random
    nonnegative trigonometric polynomials ($2d = 100,000$), showing a linear
    convergence rate. }
\end{figure}

\section{Conclusion}
When does it make sense to solve nonconvex formulations of convex problems? In
this paper we addressed this question for sum of squares decomposition and
optimization of univariate polynomials, showing that solving the nonconvex
formulation can provide a large computational speedup while still maintaining
provable guarantees on the convergence to the global optima. Key to our approach
is retaining polynomial structure in the nonconvex formulation. This enables us
to use algebraic methods to construct a certificate showing that all SOCPs are
global minima.

Our approach for finding sum of squares decompositions generalizes to
multivariate polynomials, although we do not have guarantees for the rank needed
to exclude spurious second-order critical points. On the other hand, results for
low-rank matrix factorization tell us that this rank is equal to the Pythagoras
number for quadratic forms. Thus we conjecture that a version of \Cref{thm:main}
is true for ternary quartics and matrix polynomials, cases where nonnegativity
is equivalent to the existence of a sum of squares decomposition.  Some
similarities between our conditions and a new characterization of theses cases
in terms of varieties of minimal degree~\cite{blekherman2019low} also suggest
that \Cref{thm:main} could be generalized to these cases.  In particular, the
case where the syzygy module only contains the \emph{Koszul
  syzygies}~\cite[p.~581]{cox2015groebner} could generalize the coprime case
studied in \Cref{sec:g=1_h=1}.

Another direction for future work is to apply our methods to other structured
semidefinite programs or polynomial-valued objectives such as symmetric tensor
decomposition.

\section{Acknowledgments}
We thank the two anonymous referees for their valuable feedback and comments.
B. Legat was supported by a BAEF Postdoctoral Fellowship and the NSF grant
OAC-1835443. C. Yuan was supported by the NSF grant CCF-1565235 and AFOSR grant
FA8750-19-2-1000.

\bibliographystyle{siamplain}
\bibliography{../MyLibraryTEX}
\end{document}